\newtheorem{theorem}{Theorem}[section]
\newtheorem{lemma}[theorem]{Lemma}
\newtheorem{prop}[theorem]{Proposition}
\newtheorem{cor}[theorem]{Corollary}
\theoremstyle{remark}
\def\R{{\mathbb R}}
\def\O{{\mathcal O}}
\def\norm[#1][#2]{\| #1\|_{#2}}
\def\bignorm[#1][#2]{\bigg\| #1 \bigg\|_{#2}}
\def\ptl[#1][#2]{\frac{\partial #1}{\partial #2}}
\def\japanese[#1]{\langle #1 \rangle}
\def\eps{\varepsilon}
\def\la{\langle}
\def\ra{\rangle}
\def\les{\lesssim}
\def\ges{\gtrsim}
\def\PV{{\rm P.V.}}
\def\f{\frac}
\def\mR{\mathcal R}
\renewcommand{\tilde}{\widetilde}
\def\be{\begin{equation}}
\def\ee{\end{equation}}
\numberwithin{equation}{section}
\begin{document}

\title[Strichartz estimates for Dirac operators]
{Limiting absorption principle and Strichartz estimates for Dirac operators in two and higher  dimensions  }
\author{M. Burak Erdo\smash{\u{g}}an, Michael Goldberg, William R. Green}
\thanks{The first author was partially supported by NSF grant DMS-1501041.
The second author is supported by Simons Foundation Grant 281057. The third author is supported by Simons Foundation Grant 511825  and acknowledges
the support of a Rose-Hulman summer professional development grant}  
\address{Department of Mathematics \\
University of Illinois \\
Urbana, IL 61801, U.S.A.}
 \email{berdogan@math.uiuc.edu}

\address{Department of Mathematics\\
University of Cincinnati \\
Cincinnati, OH 45221 U.S.A.}
\email{goldbeml@ucmail.uc.edu}
\address{Department of Mathematics\\
Rose-Hulman Institute of Technology \\
Terre Haute, IN 47803, U.S.A.}
\email{green@rose-hulman.edu}

\keywords{Dirac operator, resolvent, Strichartz estimate}
%\msc[2010]{35Q41, 47A10}

\begin{abstract}
In this paper we consider
Dirac operators 
in $\R^n$, $n\ge2$, with a potential $V$. Under mild decay and continuity assumptions on $V$ and some spectral assumptions on the operator,  we prove a limiting absorption principle for the resolvent, which implies a family of Strichartz estimates for the linear Dirac equation. For large potentials the dynamical estimates are not an immediate corollary of the free case since the resolvent of the free Dirac operator does not decay  in operator norm on weighted $L^2$ spaces as the frequency goes to infinity.   
\end{abstract}

\maketitle

\section{Introduction}

In this paper we obtain limiting absorption principle bounds and Strichartz estimates for the linear Dirac equation in  dimensions two and higher with potential:
\begin{align}\label{dirac}
i\partial_t \psi(x,t) = (D_m +V(x)) \psi(x,t), \,\,\,\, \psi(x,0)= \psi_0(x).
\end{align} 
Here $x\in \R^n$ and $\psi(x,t) \in \mathbb{C}^{2^{N}}$ where $N=\lfloor \frac{n+1}{2}\rfloor$. The $n$-dimensional free Dirac operator $D_m$ is
 defined by
\begin{align}\label{eq:Dm}
   D_m= -i \alpha \cdot \nabla+ m\beta = -i \sum_{k=1}^{n} \alpha_k \partial_k + m\beta,
\end{align}
where $m\geq 0$ is a constant, and the $2^{N}\times 2^{N}$ Hermitian matrices $\alpha_j$ satisfy the anti-commutation relationships
\begin{align} \label{matrixeq}
	\left\{\begin{array}{ll}
		\alpha_j \alpha_k+\alpha_k\alpha_j =2\delta_{jk}
		\mathbbm 1_{\mathbb C^{2^{N}}}
		& j,k \in\{1,2,\dots, n\}\\
		\alpha_j \beta+\beta \alpha_j= 
		\mathbb O_{\mathbb C^{2^{N}}}\\
		\beta^2 = \mathbbm 1_{\mathbb C^{2^{N}}}
		\end{array}
	\right.
\end{align}
Physically, $m$ represents the mass of the quantum particle.  If $m=0$ the particle is massless and if $m>0$ the particle is massive.  We note that dimensions $n=2,3$ are of particular physical interest.  Following standard conventions, we define the free Dirac operator in dimension two with the Pauli spin matrices
$$
	\alpha_1=\left[\begin{array}{cc} 0 & -i\\ i & 0
	\end{array}\right],\
	\alpha_2=\left[\begin{array}{cc} 0 & 1\\ 1 & 0
	\end{array}\right],\
	\beta=\left[\begin{array}{cc} 1 & 0\\ 0 & -1
	\end{array}\right].	
$$
In dimension three we use
\begin{align*}
	\beta=\left[\begin{array}{cc} I_{\mathbb{C}^2} & 0\\ 0 & -I_{\mathbb{C}^2}
	\end{array}\right], \ \alpha_i=\left[\begin{array}{cc} 0 & \sigma_i \\ \sigma_i & 0
	\end{array}\right],
	\end{align*}
	\begin{align*}
	\sigma_1=\left[\begin{array}{cc} 0 & -i\\ i & 0
	\end{array}\right],\
	\sigma_2=\left[\begin{array}{cc} 0 & 1\\ 1 & 0
	\end{array}\right],\
	 \sigma_3=\left[\begin{array}{cc} 1 & 0\\ 0 & -1
	\end{array}\right].
\end{align*}
In higher dimensions $n>3$, one can create a full set of anti-commuting matrices $\alpha_j$ iteratively, see \cite{KS} for an explicit construction.

The Dirac equation arose as an attempt to reconcile the theories of relativity and quantum mechanics and describe the behavior of subatomic particles at near luminal speeds.  The relativistic relationship between energy, momentum and mass, $E^2=c^2p^2+m^2c^4$ can be combined with the quantum-mechanical notions of energy $E=i\hbar \partial_t$ and momentum $p=-i\hbar \nabla$ to obtain a Klein-Gordon equation  
$$
	-\hbar^2 \partial_{t}^2 \psi(x,t)=-c^2\hbar^2 \Delta \psi(x,t)
		+m^2c^4 \psi(x,t).
$$
Here $\hbar$ is Planck's constant and $c$ is the speed of light.  
However, the Klein-Gordon does not preserve $L^2$ norm of the initial data and is incompatible with quantum mechanical interpretations of the wave function.
By considering $E$ directly, one arrives at the non-local equation
\be\label{eq:nonloc}
i\hbar \partial_t \psi(x,t)=\sqrt{-c^2\hbar^2 \Delta 
	+m^2c^4}\, \psi(x,t).
\ee
In our mathematical analysis, we rescale so that we may take the constants $\hbar$ and $c$ to be one.    Dirac's insight was to rewrite the right hand side in terms of the first order operator $D_m=-i\alpha \cdot \nabla+m\beta$. This leads to the free Dirac equation, \eqref{dirac} with $V=0$, a system of coupled hyperbolic equations with $\alpha, \beta$ required to be matrices.  Dirac's modification allows one to account for the spin of quantum particles, as well as providing a way to incorporate external electro-magnetic fields in a manner compatible with the relativistic theory where the Klein-Gordon and \eqref{eq:nonloc} cannot.  In addition, we note that \eqref{eq:nonloc} has infinite speed of propagation, which is in contrast with the causality principle in relativity.  In dimension $n=3$, the Dirac equation models the evolution of spin $1/2$ particles, while in dimension $n=2$ the massless Dirac equation is of considerable interest due to its connection to graphene, see, e.g., \cite{FW}.  

Formally, the Dirac equation is a square root of a system of Klein-Gordon or wave equations when $m>0$ and $m=0$ respectively.  One consequence is that the spectrum of the free Dirac operators is unbounded in both the positive and negative directions.  In particular, the continuous spectrum of $D_m$ is $(-\infty,-m]\cup[m,\infty)$.  By Weyl's criterion, the continuous spectrum of the perturbed Dirac operator is also $(-\infty,-m]\cup[m,\infty)$ for a large class of potentials.   The absence of embedded eigenvalues in the continuous spectrum in general dimensions was established in \cite{BC1} for the  class of potentials we are interested in by adapting the argument of \cite{BG1} for three dimensions.  This result was used to study linearizations about a solitary wave for a non-linear equation.  For other results in this direction for small dimensions and specific classes of potentials see \cite{Roze,BG1,Vog,GM}. 
Finally, there is no singular continuous spectrum, see \cite{GM}. For a further background on the Dirac equation see \cite{Thaller}.

We denote the perturbed Dirac operator by $H:=D_m+V$, then $e^{-itH}$ is formally the solution operator to \eqref{dirac}.  For the class of potentials considered in Theorem~\ref{thm:main}, we note that $H$ is self-adjoint by the Kato-Rellich theorem.  We  denote $a-:=a-\eps$ for a small, but fixed $\eps>0$.  Further, we write $A\les B$ to indicate there is a fixed absolute constant $C>0$ so that $A\leq C B$.
 
\begin{theorem}
\label{thm:main} Let  $V$ be a  $2^{N}\times 2^{N}$ real Hermitian matrix  for all
$x\in\R^n$, $n\ge2$, with continuous entries satisfying  
 $|V_{ij}(x)|  \les  \la x\ra^{-1- } $ when $m=0$, and $|V_{ij}(x)|  \les  \la x\ra^{-2- } $ when $m>0$.
Furthermore, assume that threshold energies are regular.  Then, with $P_c$ being the projection onto the continuous
spectrum,
\begin{equation}\label{eq:strichmassive} \|\la \nabla\ra^{-\theta} e^{-itH} P_c f\|_{L_t^p(L_x^q)}
\les \|f\|_{L^2(\R^n)} \end{equation} in the case $m > 0$, provided that 
\begin{equation*}
\theta \geq \frac12 + \frac1p - \frac1q \quad \text{and} \quad 
\frac{2}{p}+\frac{n}{q}=\frac{n}{2}, \,\,\,\,\, 2\le q<\frac{2n}{n-2}.
\end{equation*}
In the case $m=0$, the bound is
\begin{equation}\label{eq:strichmassless} \| |\nabla|^{-\theta} e^{-itH} P_c f\|_{L_t^p(L_x^q)}
\les \|f\|_{L^2(\R^n)} \end{equation} provided that 
\begin{equation*}
\theta = \frac{n}2-\frac1p-\frac{n}q \quad \text{and} \quad  
\frac{2}{p}+\frac{n-1}{q}\leq\frac{n-1}{2}, \,\,\,\,\, p > 2,\  2 \leq q < \infty.
\end{equation*}
%with the understanding that the upper bound on $q$ is $\infty$ when  $n=2$.
\end{theorem}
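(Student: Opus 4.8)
\emph{Strategy.} The plan is to deduce the two families of estimates from (a) the corresponding Strichartz bounds for the \emph{free} flow $e^{-itD_m}$, and (b) the local smoothing (Kato) estimates furnished by the limiting absorption principle of the earlier sections, combining them via Duhamel's formula. Since $V$ is a bounded self-adjoint multiplication operator and $H=D_m+V$, the function $\psi(t)=e^{-itH}P_cf$ satisfies
\be\label{plan:duh}
e^{-itH}P_cf \;=\; e^{-itD_m}P_cf \;-\; i\int_0^t e^{-i(t-s)D_m}\,V\,e^{-isH}P_cf\,ds .
\ee
After applying $\la\nabla\ra^{-\theta}$ (respectively $|\nabla|^{-\theta}$) and taking the $L^p_t(L^q_x)$ norm, the first term is bounded by $\|P_cf\|_{L^2}\le\|f\|_{L^2}$ once Step 1 is in hand, so everything reduces to estimating the Duhamel integral.

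\emph{Step 1 (free Dirac Strichartz).} From $D_m^2=-\Delta+m^2$ one has the orthogonal splitting $\mathbbm 1=\Pi_++\Pi_-$, $\Pi_\pm=\tfrac12\big(\mathbbm 1\pm D_m(-\Delta+m^2)^{-1/2}\big)$, with $e^{-itD_m}=e^{-it\sqrt{-\Delta+m^2}}\,\Pi_++e^{it\sqrt{-\Delta+m^2}}\,\Pi_-$. The matrix Fourier symbol of $D_m(-\Delta+m^2)^{-1/2}$ has entries that are constant multiples of $\xi_k(|\xi|^2+m^2)^{-1/2}$ and $m(|\xi|^2+m^2)^{-1/2}$ (and, when $m=0$, of $\xi_k/|\xi|$), so they satisfy the Mikhlin condition; hence $\Pi_\pm$ is bounded on $L^q$ for $1<q<\infty$ and commutes with $\la\nabla\ra^{-\theta}$ and $|\nabla|^{-\theta}$. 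Thus the bound $\|\la\nabla\ra^{-\theta}e^{-itD_m}g\|_{L^p_t(L^q_x)}\les\|g\|_{L^2}$ reduces to the classical Strichartz estimate for the Klein--Gordon half-flow $e^{-it\sqrt{-\Delta+m^2}}$ when $m>0$ (valid in the Schr\"odinger-admissible range with loss $\theta\ge\frac12+\frac1p-\frac1q$), and for $m=0$ to the half-wave flow $e^{-it|\nabla|}$ with the homogeneous loss $\theta=\frac n2-\frac1p-\frac nq$ in the wave-admissible range $\tfrac2p+\tfrac{n-1}{q}\le\tfrac{n-1}{2}$, $p>2$. These are exactly the ranges and losses in the statement.

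\emph{Step 2 (smoothing and the Duhamel term).} Factor $V=B^*A$ with $B=|V|^{1/2}$ and $A=(\sign V)|V|^{1/2}$; both are self-adjoint multiplications with $\|B(x)\|,\|A(x)\|\les\la x\ra^{-\frac12-}$ when $m=0$ and $\les\la x\ra^{-1-}$ when $m>0$, which is exactly the decay for which the weighted resolvent bounds hold. The limiting absorption principle for $H$ on $\operatorname{Ran}P_c$ gives $\sup_{\lambda}\|A\,R_H(\lambda\pm i0)\,A^*\|_{L^2\to L^2}<\infty$, and the free limiting absorption principle gives $\sup_{\lambda}\|B\,R_0(\lambda\pm i0)\,B^*\|_{L^2\to L^2}<\infty$; by Kato's theorem these are equivalent to the smoothing estimates
\be\label{plan:smooth}
\|A\,e^{-isH}P_cf\|_{L^2_{s,x}}\les\|f\|_{L^2},\qquad \|B\,e^{-isD_m}g\|_{L^2_{s,x}}\les\|g\|_{L^2}.
\ee
Put $F(s):=A\,e^{-isH}P_cf$, so $\|F\|_{L^2_{s,x}}\les\|f\|_{L^2}$ by \eqref{plan:smooth}, and rewrite the Duhamel term of \eqref{plan:duh} as $\int_0^t\la\nabla\ra^{-\theta}e^{-i(t-s)D_m}B^*F(s)\,ds$. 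Since $p>2$ throughout both admissible ranges, the Christ--Kiselev lemma reduces the desired $L^2_{s,x}\to L^p_t(L^q_x)$ bound to the untruncated one, where
\[
\int_{\R}\la\nabla\ra^{-\theta}e^{-i(t-s)D_m}B^*F(s)\,ds=\la\nabla\ra^{-\theta}e^{-itD_m}\,w,\qquad w:=\int_{\R}e^{isD_m}B^*F(s)\,ds .
\]
Pairing $w$ against $h\in L^2$ and using $(e^{isD_m})^*=e^{-isD_m}$ converts the estimate for $\|w\|_{L^2}$ into the second bound of \eqref{plan:smooth}, giving $\|w\|_{L^2}\les\|F\|_{L^2_{s,x}}\les\|f\|_{L^2}$; Step 1 then yields $\|\la\nabla\ra^{-\theta}e^{-itD_m}w\|_{L^p_t(L^q_x)}\les\|w\|_{L^2}\les\|f\|_{L^2}$. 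The case $m=0$ is identical with $\la\nabla\ra^{-\theta}$ replaced by $|\nabla|^{-\theta}$.

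\emph{Main obstacle.} The genuine difficulty is entirely the limiting absorption principle for $H$ that underlies \eqref{plan:smooth}: because $\|\la x\ra^{-\sigma}R_0(\lambda\pm i0)\la x\ra^{-\sigma}\|$ does not tend to $0$ as $|\lambda|\to\infty$, one cannot invert $\mathbbm 1+A\,R_0(\lambda\pm i0)B^*$ by a Neumann series at high frequency no matter how small $V$ is; obtaining uniform invertibility there, together with the threshold analysis where the regularity hypothesis enters and the exclusion of embedded eigenvalues, is the content of the preceding sections and is what we invoke here. Granting that, the only remaining work is bookkeeping --- checking the free Dirac Strichartz estimates with the precise exponents in the massive and massless regimes, and keeping track of the homogeneous versus inhomogeneous derivative weight.
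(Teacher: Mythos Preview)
Your proposal is correct and follows essentially the same approach as the paper: Duhamel's formula, free Strichartz estimates (which the paper simply cites from~\cite{DF07} and~\cite{KT} rather than rederives via $\Pi_\pm$), Kato smoothing from the limiting absorption principle of Theorem~\ref{thm:limap}, and the Christ--Kiselev lemma, with only a cosmetic difference in how $V$ is factored (the paper writes $V=(V\la x\ra^{\sigma})\cdot\la x\ra^{-\sigma}$ rather than $V=B^*A$). One small inaccuracy in your ``Main obstacle'' paragraph: for sufficiently small $V$ the Neumann series \emph{does} converge uniformly in $\lambda$, since $\|V\mR_0(\lambda)\|_{B\to B}\les\|\la x\ra^{1+}V\|_{L^\infty}$; the genuine difficulty, and the content of Section~\ref{sec:high_limap}, is only for large $V$.
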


The combinations of $(p,q , \theta)$ stated above are the same ones found in Strichartz estimates for the free massive ($m>0$) and massless ($m=0$) Dirac equation, respectively.  Note that the range of admissible Strichartz exponents $(p,q)$ match those for the Schr\"odinger equation in the massive case, and the derivative is not homogeneous.  This reflects the fact that the low energy behavior of the Dirac system is comparable to the Schr\"odinger equation, while the high energy behavior is closer to the wave equation (which requires differentiability of initial data).  See the Appendix of~\cite{DF07} for a derivation of Strichartz estimates for the free evolution $e^{-itD_m}$.  The free massless Dirac system has the same scaling properties and admissible combinations as the free wave equation, which are proved in~\cite{KT} for the wave equation.

These families of perturbed Strichartz estimates are a consequence of the uniform resolvent estimates that we prove.  Much of the paper is devoted to proving the following resolvent bounds, which hold for any subset of the continuous spectrum of $H$.

\begin{theorem} \label{thm:limap}  Let  $V$ be a  $2^{N}\times 2^{N}$ real Hermitian matrix  for all
$x\in\R^n$, $n\ge2$, with continuous entries satisfying  
 $|V_{ij}(x)|  \les  \la x\ra^{-1- }$.
% when $m=0$, and $|V_{ij}(x)|  \les  \la x\ra^{-2- } $ when $m>0$.
 Then for  $\sigma > \frac12$ there exists $\lambda_1<\infty$ so that 
\begin{equation} \label{eq:limap1}
\sup_{|\lambda| > \lambda_1 } \ 
\| \la x\ra^{-\sigma}   (H - (\lambda + i0))^{-1}
     \la x\ra^{-\sigma}\|_{2\to2} \les 1.
\end{equation} 
Under the assumption that the threshold energies are regular, and if $m>0$ the stronger decay condition $|V_{ij}(x)| \les \la x\ra^{-2-}$, this bound can be extended as follows 
$$
\sup_{|\lambda| >m} \ 
\| \la x\ra^{-\sigma}   (H - (\lambda + i0))^{-1}
     \la x\ra^{-\sigma}\|_{2\to2} \les 1, 
$$
provided that $\sigma>\frac12$ when  $m=0$, and   $\sigma>1$ when $m>0$.
\end{theorem}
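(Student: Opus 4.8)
The plan is to reduce the two estimates to a single uniform-in-$\lambda$ invertibility statement for a Birman--Schwinger operator, and then to establish that invertibility in three regimes: at each fixed energy by a Fredholm argument using the absence of embedded eigenvalues, on compact energy windows by norm-continuity, and --- the genuinely difficult regime --- as $|\lambda|\to\infty$, where a rescaling/compactness argument is needed.

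First I would assemble the free-resolvent input. From $(D_m-z)(D_m+z)=-\Delta+m^2-z^2$ one has $R_0(z):=(D_m-z)^{-1}=(D_m+z)(-\Delta+(m^2-z^2))^{-1}$, so the Dirac limiting absorption bounds reduce to those for $-\Delta$ together with the uniform resolvent bound for $\sqrt{-\Delta}$ that controls the $-i\alpha\cdot\nabla\,(-\Delta+(m^2-z^2))^{-1}$ piece. This gives, for $\sigma>\frac12$, that $\la x\ra^{-\sigma}R_0(\lambda\pm i0)\la x\ra^{-\sigma}$ is bounded on $L^2$ and H\"older continuous in $\lambda$ for $|\lambda|$ bounded away from $m$, with the essential caveat --- the phenomenon flagged in the abstract --- that this norm does \emph{not} decay as $|\lambda|\to\infty$, since $z(-\Delta+(m^2-z^2))^{-1}$ and $(-i\alpha\cdot\nabla)(-\Delta+(m^2-z^2))^{-1}$ both have weighted norm $\sim1$ there. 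Near the thresholds $|\lambda|=m$: if $m=0$ the resolvent stays of order $-1$ and $\sigma>\frac12$ suffices, while if $m>0$ the summand $m\beta(-\Delta+(m^2-z^2))^{-1}$ degenerates, as $\lambda\to\pm m$, to the order-$(-2)$ operator $m\beta(-\Delta)^{-1}$, which is what forces $\sigma>1$ and the decay hypothesis $\la x\ra^{-2-}$ in the massive case. Writing $V=v^*Uv$ with $v=|V|^{\frac12}$ and $U$ the matrix $\sign$ of $V$, the hypotheses give $v\les\la x\ra^{-\sigma}$ for an admissible $\sigma$, and the symmetric resolvent identity
\[
(H-z)^{-1}=R_0(z)-R_0(z)\,v^*\,M(z)^{-1}\,v\,R_0(z),\qquad M(z):=U+vR_0(z)v^*,
\]
reduces both assertions to the bound $\sup\|M(\lambda\pm i0)^{-1}\|_{2\to2}<\infty$ over the relevant $\lambda$-range.

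For each $|\lambda|\ge m$ the operator $vR_0(\lambda\pm i0)v^*$ is compact on $L^2$: splitting $R_0(z)=z(-\Delta+(m^2-z^2))^{-1}+D_m(-\Delta+(m^2-z^2))^{-1}$, each summand gains one or two derivatives while $v$ decays, so each is an operator-norm limit of Hilbert--Schmidt operators (and at the massive threshold $|\lambda|=m$ it is the order-$(-2)$ piece that needs $|V|\les\la x\ra^{-2-}$). Hence $M(\lambda\pm i0)$ is Fredholm of index zero; a null vector $\phi$ produces $\psi:=R_0(\lambda\pm i0)v^*\phi\in\la x\ra^{\sigma}L^2$ solving $(H-\lambda)\psi=0$ distributionally, which is impossible for $|\lambda|>m$ by the absence of embedded eigenvalues of $H$ \cite{BC1}, and impossible for $|\lambda|=m$ by the threshold-regularity hypothesis. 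Thus $M(\lambda\pm i0)$ is invertible for every $|\lambda|\ge m$, and by the H\"older continuity above $\lambda\mapsto M(\lambda\pm i0)^{-1}$ is continuous, hence bounded on compact subsets of $\{|\lambda|\ge m\}$. For \eqref{eq:limap1} only the regime $|\lambda|$ large is at issue, and only \cite{BC1} and $|V|\les\la x\ra^{-1-}$ are used.

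The main obstacle is the behaviour as $|\lambda|\to\infty$: since $vR_0(\lambda\pm i0)v^*$ is neither small nor norm-convergent there, $\|M(\lambda\pm i0)^{-1}\|$ cannot be controlled by a Neumann series or by continuous extension to $\lambda=\infty$ --- this is exactly why the perturbed estimates are not an immediate corollary of the free ones. I would argue by contradiction: if no finite $\lambda_1$ worked there would be $\lambda_j\to+\infty$ (say) and unit vectors $\phi_j$ with $M(\lambda_j+i0)\phi_j\to0$ in $L^2$; then $\psi_j:=R_0(\lambda_j+i0)v^*\phi_j$ satisfies $(H-\lambda_j)\psi_j\to0$ in weighted $L^2$ and, via $(D_m-\lambda_j)\psi_j=v^*\phi_j-V\psi_j+o(1)$, is bounded in a weighted $H^1$ at spatial frequency $\sim\lambda_j$. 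Rescaling by $\lambda_j$ turns $D_m-\lambda_j$ into $\lambda_j(D_{m/\lambda_j}-1)$, with vanishing mass $m/\lambda_j\to0$, and turns the potential into $\lambda_j^{-1}V(\cdot/\lambda_j)$, which tends to $0$ in $L^\infty$; once one checks that this rescaled potential is also negligible against the \emph{free massless} limiting absorption estimate --- the delicate weight bookkeeping, where the range $\frac12<\sigma<1$ enters --- the rescaled $\psi_j$ become asymptotic solutions of $(D_0-1)\psi=0$. Since $D_0$ has purely absolutely continuous spectrum with no eigenvalue, a concentration--compactness argument (ruling out both concentration at the spatial origin and escape of $L^2$-mass to spatial infinity, by feeding the equation back into the free estimate to propagate local decay) forces the rescaled $\psi_j\to0$, hence $\phi_j\to0$, contradicting $\|\phi_j\|_2=1$. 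This produces the finite $\lambda_1$ of \eqref{eq:limap1}, and with the previous step the extension to all $|\lambda|>m$; the care all sits in this last step, since the rescaled potential, though small in $L^\infty$, no longer decays on the scale $\la y\ra$ and so the reduction to the free massless operator must be arranged without losing the source-term decay that the free limiting absorption principle requires.
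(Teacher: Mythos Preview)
Your treatment of fixed energies and compact windows (Fredholm alternative, absence of embedded eigenvalues, norm continuity) is sound and parallels the paper's scheme; the paper in fact quotes \cite{GM} for the intermediate range and argues near threshold much as you do. The genuine gap is in the high-energy step, at exactly the point you flag as delicate. The claim ``once one checks that this rescaled potential is also negligible against the free massless limiting absorption estimate'' is the step that fails: after rescaling $y=\lambda_j x$, the potential $\tilde V_j(y)=\lambda_j^{-1}V(y/\lambda_j)$ satisfies $\|\la y\ra^{2\sigma}\tilde V_j\|_{L^\infty}\sim\lambda_j^{2\sigma-1}\to\infty$ for every $\sigma>\tfrac12$, and equivalently $\|\tilde V_j\|_{B^*\to B}\sim1$ (the sum $\sum_k 2^k\|\tilde V_j\|_{L^\infty(D_k)}$ picks up $\sim\lambda_j^{-1}\cdot 2^k$ on the whole range $2^k\le\lambda_j$). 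So $\tilde V_j$ is not small in any norm against which the free limiting absorption principle is formulated, and the rescaled equation does not reduce to $(D_0-1)\tilde\psi=0$ plus a negligible remainder. Moreover the normalization $\|\phi_j\|_2=1$, transported through $v$ and the rescaling, only controls $\tilde\psi_j$ on the growing ball $|y|\les\lambda_j$, so weak limits can vanish and your concentration--compactness sketch has no device to prevent this dispersion. This is not a technicality: the non-decay of $\|\mR_0(\lambda)\|_{L^2_\sigma\to L^2_{-\sigma}}$ as $|\lambda|\to\infty$ is exactly what blocks Agmon-type bootstrap/compactness arguments here, and your rescaling merely transfers the obstruction from the resolvent to the potential.

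The paper bypasses compactness entirely. Writing $\mR_V(\lambda)=\mR_0(\lambda)[I+L_zR_0(z^2)]^{-1}$ with $L_z=V(D_m+\lambda)$ and $z^2=\lambda^2-m^2$, it proves directly that $\|(L_zR_0(z^2))^M\|_{B\to B}\le\tfrac12$ for $M$ and $z$ large, which sums the Neumann series even though individual factors have norm $\sim1$. The mechanism is a directional decomposition $R_0=\sum_iR_i+R_d$ into angular sectors of aperture $\delta\sim M^{-1}$ plus a short-range piece. In \emph{directed} products (consecutive sectors within a common hemisphere) each directional factor translates supports by $\ges d$ in a fixed direction, so after $O(A/d)$ factors the support lies in $\{|x|>A\}$ where the $B^*\to B$ norm of $V$ is as small as one likes, and the remaining factors contribute a small constant each; in \emph{undirected} products some adjacent pair of sectors is separated by $\ges\delta$, and a non-stationary-phase bound (proved first for $V\in C_c^\infty$, then extended by density using only continuity of $V$) drives that pair to zero as $z\to\infty$. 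This geometric/oscillatory argument, adapted from the magnetic Schr\"odinger setting of \cite{EGS,EGS2}, is the missing idea that replaces the unavailable smallness of $V\mR_0$.
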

We note that the proof of the high energy limiting absorption principle \eqref{eq:limap1} does not require $V$ to be real or Hermitian.     Since $V$ is assumed to be bounded and the free Dirac operator $D_m$ is self-adjoint, $H$ has the same domain as $D_m$ and for unit functions $\eta$ in the domain the quadratic form $\la H\eta, \eta\ra$ is confined to a strip of finite width around the real axis.  

The immediate consequence of \eqref{eq:limap1} is that there cannot be any embedded eigenvalues or resonances on $(-\infty,-\lambda_1)\cup(\lambda_1,\infty)$ for $\lambda_1=\lambda_1(V) $ sufficiently large.  A perturbation argument shows that the eigenvalue-free zone
extends to a sector of the complex plane.
\begin{cor} \label{cor:e-free}
Under the hypotheses of Theorem~\ref{thm:limap}, there exist $\lambda_1 < \infty$ and $\delta > 0$ depending on 
$V$, $m$, and $\sigma > \frac12$ so that
\be \label{eqn:limap2}
\sup_{\substack{|\lambda| > \lambda_1\\ 0 < |\gamma| < \delta |\lambda|}} 
\| \la x\ra^{-\sigma} (H - (\lambda + i\gamma))^{-1} \la x\ra^{-\sigma}\|_{2\to 2} \les 1.
\ee 
As a result, there is a compact subset of the complex plane outside of which the spectrum of $H$ is confined to the real axis.
\end{cor}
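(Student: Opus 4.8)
The plan is to derive the sectorial resolvent bound \eqref{eqn:limap2} from \eqref{eq:limap1} by a Neumann-series perturbation carried out at the level of the symmetric resolvent identity in the potential. (A naive perturbation in the imaginary shift, expressing $(H-(\lambda+i\gamma))^{-1}$ through $(H-(\lambda+i0))^{-1}$ via the first resolvent identity, does not close: after conjugating by $\la x\ra^{-\sigma}$ one is left with an unbalanced factor $\la x\ra^{\sigma}(H-(\lambda+i\gamma))^{-1}\la x\ra^{-\sigma}$ whose norm degenerates as $\gamma\to0$.) Factor $V=v_1v_2$ with $|v_j(x)|\les\la x\ra^{-\frac12-}$; writing $R_m(z)=(D_m-z)^{-1}$, the symmetric resolvent identity
\begin{equation*}
(H-z)^{-1}=R_m(z)-R_m(z)\,v_1\,\big[I+v_2R_m(z)v_1\big]^{-1}\,v_2R_m(z)
\end{equation*}
holds and defines a bounded operator $\la x\ra^{\sigma}L^2\to\la x\ra^{-\sigma}L^2$ for any $\sigma>\tfrac12$, as soon as $I+v_2R_m(z)v_1$ is boundedly invertible on $L^2$; moreover, for $\operatorname{Im}z\ne0$ that invertibility forces $z\in\rho(H)$. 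So it suffices to prove two uniform estimates on the sector $\mathcal S_\delta:=\{\lambda+i\gamma:\ |\lambda|>\lambda_1,\ 0<|\gamma|<\delta|\lambda|\}$: \textbf{(a)} $\sup_{\mathcal S_\delta}\|\la x\ra^{-s}R_m(z)\la x\ra^{-s}\|_{2\to2}\les1$ for some $s>\tfrac12$ (which, being monotone in $s$, we may take $\le\tfrac12+$), and \textbf{(b)} $\sup_{\mathcal S_\delta}\|[I+v_2R_m(z)v_1]^{-1}\|_{2\to2}\les1$.

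For \textbf{(a)} I would use $R_m(z)=(D_m+z)(-\Delta+m^2-z^2)^{-1}$. For $z\in\mathcal S_\delta$ with $\lambda_1$ large the spectral parameter $w:=z^2-m^2$ satisfies $|w|\sim\lambda^2$, $|z|\sim|w|^{1/2}$ and $|\arg w|\les\delta$, so $w$ lies in a thin complex sector about $(0,\infty)$, where the Kenig--Ruiz--Sogge/Agmon estimates for the Helmholtz resolvent $(-\Delta-w)^{-1}$ (together with their gradient versions, which absorb the first-order operator $D_m+z$ since $|z|\sim|w|^{1/2}$) yield $\|\la x\ra^{-s}R_m(z)\la x\ra^{-s}\|_{2\to2}\les1$ uniformly on $\mathcal S_\delta$ once $\lambda_1$ is large and $\delta$ small. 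This is just the sectorial form of the free case of \eqref{eq:limap1}, delivered by the same estimates that prove \eqref{eq:limap1}; the regimes $\gamma>0$ and $\gamma<0$ correspond to the two boundary values $R_m(\lambda\pm i0)$ and are symmetric (and, since $H$ is self-adjoint under the stated hypotheses, also related by $(H-\bar z)^{-1}=((H-z)^{-1})^*$).

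For \textbf{(b)} I would perturb off the real axis. By the proof of \eqref{eq:limap1} applied with $V$, in the equivalent form that $I+v_2R_m(\lambda\pm i0)v_1$ is invertible with $\sup_{|\lambda|>\lambda_1}\|[I+v_2R_m(\lambda\pm i0)v_1]^{-1}\|_{2\to2}=:C_0<\infty$, together with \textbf{(a)} at $\gamma=0$, one writes for $z=\lambda+i\gamma\in\mathcal S_\delta$ (taking $R_m(\lambda\pm i0)$ according to $\operatorname{sgn}\gamma$)
\begin{equation*}
I+v_2R_m(z)v_1=\big(I+v_2R_m(\lambda\pm i0)v_1\big)\Big(I+\big[I+v_2R_m(\lambda\pm i0)v_1\big]^{-1}v_2\big(R_m(z)-R_m(\lambda\pm i0)\big)v_1\Big),
\end{equation*}
and inverts the second factor by a Neumann series, uniformly on $\mathcal S_\delta$, provided
\begin{equation*}
\sup_{|\lambda|>\lambda_1}\ \big\|v_2\big(R_m(\lambda+i\gamma)-R_m(\lambda+i0)\big)v_1\big\|_{2\to2}\ \le\ \tfrac{1}{2C_0}\qquad(0<\gamma<\delta|\lambda|).
\end{equation*}
Establishing this last bound uniformly over the noncompact frequency range $|\lambda|\to\infty$ is the main obstacle. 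I would deduce it from \textbf{(a)}: on $\mathcal S_\delta$ the relative change of energy $|w_z-w_{\lambda+i0}|/|w_{\lambda+i0}|$ is $\les\delta$, so it is enough to have a scale-invariant H\"older-in-$w$ modulus of continuity for $\la x\ra^{-s}(-\Delta-w)^{-1}\la x\ra^{-s}$ (and its gradient) on the sector --- which the uniform Helmholtz estimates supply, the positive H\"older exponent being furnished by the room between $|v_j|\les\la x\ra^{-\frac12-}$ and the critical weight $\la x\ra^{-1/2}$. Choosing $\delta$ small then makes the displayed inequality hold, giving \textbf{(b)} and, through the resolvent identity, \eqref{eqn:limap2}.

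Finally, for the spectral statement: \textbf{(b)} places every $z$ with $|\operatorname{Re}z|>\lambda_1$ and $0<|\operatorname{Im}z|<\delta|\operatorname{Re}z|$ in $\rho(H)$, while every $z$ with $|\operatorname{Im}z|$ larger than the half-width $C(V)$ of the strip containing the numerical range of $H$ (which is $0$, i.e.\ $\sigma(H)\subset\R$ already, when $V$ is real Hermitian) is also in $\rho(H)$; the part of $\{\operatorname{Im}z\ne0\}$ not covered by these two regions forces $|\operatorname{Re}z|$ and $|\operatorname{Im}z|$ to be bounded in terms of $V$ and $\delta$, hence lies in a compact set, outside of which $\sigma(H)$ is confined to $\R$.
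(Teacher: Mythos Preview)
Your approach is essentially the same as the paper's: both perturb $I+V\mR_0(z)$ (in your case its symmetrized variant $I+v_2\mR_0(z)v_1$) off the real axis via a Neumann series, the key input being uniform smallness of $\mR_0(\lambda+i\gamma)-\mR_0^\pm(\lambda)$ in a weighted norm, and the spectral conclusion is argued the same way. The paper obtains that smallness by explicitly rescaling by $\lambda$ so that the Schr\"odinger spectral parameter lands in a fixed neighborhood of $1$ and then invoking continuity there (Proposition~\ref{prop:continuity}), which is exactly the content of your ``scale-invariant H\"older modulus'' coming from the extra room $\sigma>\tfrac12$.
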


Our results apply to a broad class of electric potentials $V(x)$ and require no implicit smallness condition, only that $V$ is bounded, continuous and satisfies a mild polynomial decay at infinity.  The potentials need not be small, radial, or smooth.  Our results apply for the  potentials   that naturally arise when linearizing about soliton solutions for the non-linear Dirac equation.

There is a rich history of results on limiting absorption principles and mapping estimates of dispersive equations.  Much of this history is focused on the analysis of the Schr\"odinger, wave or Klein-Gordon equation.  We refer the reader to \cite{GV,RS,GS,GST,DF07,Stef,EGS,BT,MMT,EGS2,FV,DFVV,RT2}, for example.  There are far fewer results in the case of the Dirac system, due to its more complicated mathematical structure.  

It is known that the Dirac resolvent does not decay in the spectral parameter, \cite{Yam}.  That is, the bound \eqref{eq:limap1} does not decay as $\lambda\to \infty$.  This is a stark contrast to the Schr\"odinger resolvent in which one obtains a decay in the spectral parameter $\lambda$ as $\lambda \to \infty$. 
The bootstrapping argument of Agmon, \cite{Agmon}, produces uniform bounds on the resolvent operators only on compact subsets of the purely absolutely continuous spectrum.   Limiting absorption principles have been studied to establish the limiting behavior of resolvents as one approaches the spectrum, see \cite{Vog,BaHe,BG1}.  The work of Georgescu and Mantoiu provides resolvent bounds on compact subsets of the spectrum, \cite{GM}.  Other limiting absorption principles have been established, often in service of providing dispersive, smoothing or Strichartz estimates, \cite{Bouss1,DF,BG}.  Very recently, \cite{CG}, established a limiting absorption principle for the free massless Dirac operator in dimensions $n\geq 2$.

One consequence of the resolvent bounds in Theorem~\ref{thm:limap} is the family of Strichartz estimates given in Theorem~\ref{thm:main}.   Strichartz estimates have been used to study non-linear Dirac equations, \cite{MMNO,CTS,BH3,BH,BC1,BC2}.  These are often adapted to the problem by localizing in frequency or considering specialized potentials.  Strichartz estimates may be obtained by establishing a virial identity see, for example, \cite{BDF,C}, which consider magnetic potentials with a certain smallness condition.   The first and third author proved a class of Strichartz estimates for the two-dimensional Dirac equation, \cite{EGDirac2d}, by first establishing dispersive estimates of the two-dimensional Dirac propagator.

The paper is organized as follows:  In Section~\ref{sec:setup} we show how the Strichartz estimates in Theorem~\ref{thm:main} follows from the resolvent bounds in Theorem~\ref{thm:limap}.  The bulk of the paper is then devoted to proving Theorem~\ref{thm:limap}.

In Section~\ref{sec:free} we present the basic properties of the free resolvents of Dirac and Schr\"odinger operators. 
The small energy case of Theorem~\ref{thm:limap} is then treated
in Section~\ref{sec:small}. 
In Section~\ref{sec:high_limap}, we treat the case of large energies
by adapting an intricate argument originally devised in~\cite{EGS,EGS2}
for Schr\"odinger operators in dimensions $n \geq 3$
with a non-smooth magnetic potential.
A brief argument in Section~\ref{sec:complex} derives Corollary~\ref{cor:e-free} from
the main high-energy bounds.

\section{The basic setup}
\label{sec:setup}

The Strichartz estimates stated in Theorem~\ref{thm:main} will be proved using
Proposition~\ref{prop:Katoth}
below, which is essentially Theorem~4.1 in \cite{RS}. It is based on
Kato's notion of smoothing
operators, see~\cite{Kato}. We recall that for a self-adjoint operator $H$,
an operator $\Gamma$ is called
$H$-smooth in the sense of Kato if for any $f\in {\mathcal D}(H)$
\begin{equation}
\label{eq:Csmooth} \|\Gamma e^{-it H} f\|_{L^2_t L^2_x}\le
C_{\Gamma}(H) \|f\|_{L^2_x}.
\end{equation}
  Let $\Omega\subset \R$ and let $P_\Omega$ be a
spectral projection of $H$ associated with a set $\Omega$. We say
that  $\Gamma$ is $H$-smooth on $\Omega$ if $\Gamma P_{\Omega}$ is
$H$-smooth.  It is not difficult to show (see
e.g.~\cite[Theorems XIII.25 and XIII.30]{RS4}) that,    $\Gamma$ is $H$-smooth on
$\Omega$ if
\begin{equation}
\label{eq:smon} \sup_{\lambda \in  \Omega} \| \Gamma
[R^+_{H}(\lambda ) -R_H^-(\lambda) ] \Gamma^* \|_{L^2 \to L^2 } \le C_{\Gamma}(H,
\Omega).
\end{equation}

Given the known Strichartz bounds for the free Dirac equation, the following proposition and Theorem~\ref{thm:limap} imply  Theorem~\ref{thm:main}.  For brevity we state only the $m>0$ case. 
\begin{prop}\label{prop:Katoth}
Let $H_0=D_m$, $m>0$, and $H= H_0 + V$, where $|V(x)|\lesssim \la x\ra^{-2\sigma}$. Assume that $w(x):=\la x \ra^{-\sigma}$
 is $H_0$-smooth  and $H$-smooth on $\Omega$  for some $\Omega\subset \R$.
Assume also that the unitary semigroup $e^{-it H_0}$ satisfies the
estimate
\begin{equation}
\label{eq:strH0} \big\| \la\nabla\ra^{-\theta} e^{-it H_0}   \big\|_{L^2 \to L^q_t L^r_x} < \infty 
\end{equation}
for some $q\in (2,\infty]$, $r\in [1,\infty]$, and $\theta \in \R$. Then the semigroup
$e^{-itH}$ associated with $H = H_0 + V$, restricted to the spectral
set $\Omega$, also verifies the estimate \eqref{eq:strH0}, i.e.,
\begin{equation}
\label{eq:strHk}  \big\|\la\nabla\ra^{-\theta} e^{-it H}P_\Omega  \big\|_{L^2 \to L^q_t L^r_x}  < \infty.  
\end{equation}
\end{prop}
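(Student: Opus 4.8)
The plan is to run the standard Kato-smoothing/Duhamel argument, following Theorem~4.1 of \cite{RS}. Write the perturbed evolution via Duhamel's formula,
\be\label{eq:Duhamel-plan}
e^{-itH}P_\Omega f = e^{-itH_0}P_\Omega f - i\int_0^t e^{-i(t-s)H_0}\,V\, e^{-isH}P_\Omega f\,ds,
\ee
so that after applying $\la\nabla\ra^{-\theta}$ the first term is controlled directly by hypothesis \eqref{eq:strH0}. For the inhomogeneous term, factor $V = V\,w^{-1}\cdot w$ (legitimate since $|V(x)|\les \la x\ra^{-2\sigma} = w(x)^2$, so $Vw^{-1}$ is a bounded multiplication operator, in fact $Vw^{-1} = (Vw^{-1})$ maps $L^2\to L^2$ boundedly). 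Then it suffices to bound the operator
\[
g \longmapsto \la\nabla\ra^{-\theta}\int_0^t e^{-i(t-s)H_0}\,(Vw^{-1})\, w\, e^{-isH}P_\Omega\, g\,ds
\]
from $L^2_x$ to $L^q_t L^r_x$. The point is to split this into three pieces: the dual Kato-smoothing bound $\|w\, e^{-isH}P_\Omega g\|_{L^2_sL^2_x}\les \|g\|_{L^2_x}$ (which holds because $w$ is $H$-smooth on $\Omega$, via \eqref{eq:smon}), boundedness of the multiplier $Vw^{-1}$ on $L^2_x$, and finally the \emph{inhomogeneous} Strichartz-type estimate for the free flow, namely that $h \mapsto \la\nabla\ra^{-\theta}\int_0^t e^{-i(t-s)H_0} h(s)\,ds$ maps $L^2_sL^2_x \to L^q_tL^r_x$. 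This last mapping is itself a consequence of \eqref{eq:strH0} together with the fact that $w$ is $H_0$-smooth: the homogeneous estimate \eqref{eq:strH0} and the $H_0$-smoothness of $w$ combine, by the usual Christ--Kiselev-free argument for retarded integrals (or directly as in \cite[Thm.~4.1]{RS}), to give the needed inhomogeneous bound. Composing the three bounds gives \eqref{eq:strHk}.

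In more detail, the order of steps I would carry out is: (i) record that $V w^{-1}$ and $w^{-1}V$ are bounded on $L^2_x$ under the decay hypothesis $|V|\les \la x\ra^{-2\sigma}$; (ii) invoke \eqref{eq:smon}, i.e. the hypothesis that $w$ is $H$-smooth on $\Omega$, to get $\|w e^{-isH}P_\Omega g\|_{L^2_{s,x}}\les\|g\|_{L^2}$, and similarly $\|w e^{-isH_0}g\|_{L^2_{s,x}}\les\|g\|_{L^2}$ from $H_0$-smoothness of $w$; (iii) establish the free inhomogeneous estimate $\big\|\la\nabla\ra^{-\theta}\int_0^t e^{-i(t-s)H_0}F(s)\,ds\big\|_{L^q_tL^r_x}\les \|w^{-1}F\|_{L^2_{s,x}}$, by pairing the $H_0$-smoothing bound (to absorb $w$) with the homogeneous Strichartz bound \eqref{eq:strH0}, using a $TT^*$-style argument and the fact that $q>2$ allows one to drop the time-ordering constraint (Christ--Kiselev, or the elementary observation in \cite{RS} that for $q>2$ the retarded and non-retarded operators differ by a bounded amount); (iv) insert the Duhamel identity \eqref{eq:Duhamel-plan}, bound the free term by \eqref{eq:strH0}, and bound the Duhamel term by composing (iii) with $F = V e^{-isH}P_\Omega f = (Vw^{-1})\big(w e^{-isH}P_\Omega f\big)$, using (i) and (ii).

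The main obstacle is step (iii): turning the homogeneous Strichartz estimate for $e^{-itH_0}$ plus the $H_0$-smoothness of $w$ into the \emph{inhomogeneous} (Duhamel) Strichartz estimate with the correct $L^2_sL^2_x \to L^q_tL^r_x$ mapping, including the removal of the $s<t$ time-ordering. This is where the hypothesis $q > 2$ is essential and where one must be careful, but it is exactly the content of \cite[Theorem~4.1]{RS}, so in practice this step is cited rather than reproved. Everything else is bookkeeping with bounded multiplication operators and the definitions of Kato smoothness. I would close by noting that the $m=0$ case is identical, with $\la\nabla\ra^{-\theta}$ replaced by $|\nabla|^{-\theta}$ and the corresponding free Strichartz input, and that the hypotheses of the proposition ($w$ is $H_0$- and $H$-smooth on $\Omega$) are supplied by Theorem~\ref{thm:limap} via \eqref{eq:smon}, while \eqref{eq:strH0} is the known free Dirac Strichartz estimate recalled in the introduction.
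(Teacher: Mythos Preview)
Your proposal is correct and follows essentially the same argument as the paper: Duhamel's formula, Christ--Kiselev to remove the time ordering (using $q>2$), then the factorization $V = (Vw^{-1})w$ combined with the dual $H_0$-smoothing bound and the $H$-smoothness of $w$ on $\Omega$. One small sharpening: in step (i) what you actually need (and implicitly use in (iv)) is not merely that $Vw^{-1}$ is bounded on $L^2$ but that $|Vw^{-1}|\les w$ (equivalently $w^{-2}V\in L^\infty$), which is exactly the hypothesis $|V|\les \la x\ra^{-2\sigma}$; the paper states it in this form.
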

\begin{proof} For completeness we supply the proof following \cite{RS}.  
We have 
$$
e^{-it H}P_\Omega f = e^{-it H_0}P_\Omega f - i \int_0^t e^{-i (t-s) H_0} V   e^{-is H }P_\Omega f ds.
$$ 
By Christ-Kiselev Lemma \cite{CK}, it suffices to prove that 
 $$
 \Big\|\la\nabla\ra^{-\theta} \int_0^\infty e^{-i (t-s) H_0} V   e^{-is H }P_\Omega  f ds \Big\|_{  L^q_t L^r_x}\lesssim \|f\|_{L^2}. 
 $$
 Using \eqref{eq:strH0}, we bound the left hand side by
\be\label{eq:temp11}
 \Big\| \int_0^\infty e^{i s H_0} V w^{-1}  w    e^{-is H }P_\Omega f ds \Big\|_{L^2 }.  
 \ee
 Since $w$ is $H_0$ smooth and $H$-smooth on $\Omega$, and $|Vw^{-1}|\les w$, 
 we have 
 $$
  \|we^{-itH}P_\Omega f \|_{L^2_t L^2_x}\lesssim \|f\|_{L^2_x}, \,\,\,\,\, \|Vw^{-1} e^{-it H_0} f\|_{L^2_t L^2_x}\lesssim \|f\|_{L^2_x},
 $$
 and its dual 
 $$
 \Big\| \int_0^\infty e^{i s H_0} V w^{-1} g(s,x) ds \Big\|_{L^2} \les \|g\|_{L^2_sL^2_x}.
 $$
Composing these two inequalities suffices to bound \eqref{eq:temp11}  by $\|f\|_{L^2}$.
 \end{proof}

\section{Properties of the Free Resolvent} \label{sec:free}

 The following identity,\footnote{Here and throughout the paper, scalar operators such as  $-\Delta+m^2-\lambda^2$ are understood as $(-\Delta+m^2-\lambda^2)\mathbbm 1_{\mathbb C^{2^{N}}}$.}  which follows from   \eqref{matrixeq},
\be  \label{dirac_schro_free}
	(D_m-\lambda \mathbbm 1)(D_m+\lambda \mathbbm 1) =(-i\alpha\cdot \nabla +m\beta -\lambda \mathbbm 1)
	(-i\alpha\cdot \nabla+m\beta+\lambda \mathbbm 1)   =(-\Delta+m^2-\lambda^2) 
\ee
allows us to formally define the free Dirac resolvent
operator $\mathcal R_0(\lambda)=(D_m-\lambda)^{-1}$ in terms of the
free resolvent $R_0(\lambda)=(-\Delta-\lambda)^{-1}$ of  the Schr\"odinger operator for $\lambda$ in the resolvent set:
\begin{align}\label{eqn:resolvdef}
	\mathcal R_0(\lambda)=(D_m+\lambda) R_0(\lambda^2-m^2).
\end{align}
We first discuss the properties of Schr\"odinger resolvent $R_0$. There are two possible continuations to the positive halfline, namely
$$
R_0^\pm(\lambda^2)=\lim_{\eps\to 0^+} R_0(\lambda^2\pm i\eps),\,\,\,\,\,\,\lambda>0,
$$
where the limit is in the operator norm from $L^2_\sigma $ to $L^2_{-\sigma}$, $\sigma>\frac12$. 
Here $L^2_\sigma$ denotes the weighted $L^2$ space with norm
$$
\|f\|_{L^2_\sigma} := \| \la \cdot\ra^{\sigma} f  \|_{L^2}.
$$  
Existence of the limits $R_0^\pm(\lambda^2)$ is known as the limiting absorption principle.
In fact $R_0^\pm(\lambda^2)$ varies continuously in $\lambda$ over the  interval  $(0,\infty)$.
In dimensions $n\geq 3$ the continuity extends to $\lambda \in [0,\infty)$ with a uniform bound
\be\label{eq:lapScunif}
 \|R_0^\pm(\lambda^2)\|_{L^2_\sigma\to L^2_{-\sigma}}\leq C_{\sigma,n}, \,\,\,\,\,\lambda \geq 0
%,\,\,\,\sigma>1. 
\ee 
provided $\sigma > 1$.
In two dimensions the free Schr\"odinger operator has a threshold resonance and consequently $R_0^\pm(\lambda^2)$ is unbounded as $\lambda$ approaches zero.
However there is still a useful uniform estimate,
\be\label{eq:lapSc}
\|\nabla R_0^\pm(\lambda^2)\|_{L^2_\sigma\to L^2_{-\sigma}}+ \lambda\|R_0^\pm(\lambda^2)\|_{L^2_\sigma\to L^2_{-\sigma}}\leq C_{\sigma,n}, \,\,\,\,\,\lambda >0,\,\,\,\sigma>\frac12, 
\ee
which is true in all dimensions $n\geq 2$.  This bound for large $\lambda$ is largely due to scaling
considerations.  The bound for small $\lambda$ will be proved in the next section.

Using the limiting absorption bounds \eqref{eq:lapSc} for Schr\"odinger and \eqref{eqn:resolvdef}, we obtain for $n\geq 2$
\be\label{eq:freeDiracLAP}
\|\mathcal R_0(\lambda)\|_{L^2_\sigma\to L^2_{-\sigma}} \leq C_{\sigma,\lambda_0,n}, \,\,\,\,\,|\lambda| >\lambda_0>m,\,\,\,\sigma>\frac12. 
\ee
An analogous uniform bound holds on the entire interval $|\lambda|>m$ if $n\geq 3$ and $\sigma > 1$. 
In the case $m=0$ we have the following stronger uniform bound for $n\geq 2$
\be\label{eq:masslessLAP}
\|\mathcal R_0(\lambda)\|_{L^2_\sigma\to L^2_{-\sigma}} \leq C_{\sigma, n}, \,\,\,\,\,|\lambda|  >0,\,\,\,\sigma>\frac12. 
\ee
 In particular, two dimensional massless free Dirac operator does not have a threshold resonance.
 
The kernel of the free resolvent $R_0^+(\lambda^2)$ in $\R^n$ is
given by\footnote{Constants $C_n$ are allowed to change from line to
line.}
\[
R_0^+(\lambda^2)(x,y) = C_n\,
\frac{\lambda^\frac{n-2}{2}}{|x-y|^{\frac{n-2}{2}}}
H_{\frac{n-2}{2}}^+(\lambda|x-y|)
\]
where $H_\nu^+$ is a Hankel function. There is the scaling relation
\begin{equation}   \label{eq:Rscale}
R_0^+(\lambda^2)(x,y)= \lambda^{n-2} R_0^+(1)(\lambda x,\lambda y)
\quad \forall\;\lambda>0
\end{equation}
and the representation, see the asymptotics of $H_\nu^+$
in~\cite{AbSteg},
\begin{equation}
  \label{eq:Hsplit}
  R_0^+(1) (x,y) = \frac{e^{i|x-y|}}{|x-y|^{\frac{n-1}{2}}}
   a(|x-y|) + \frac{b(|x-y|)}{|x-y|^{n-2}}
\end{equation}
provided $n\ge2$.  Here
\begin{equation}\label{eq:a}
 |a^{(k)}(r)| \les r^{-k} \quad\forall\;k\ge0, \quad a(r)=0 \quad\forall\; 0< r<\f12
 \end{equation}
and $b(r)=0$ for all $r>\f34$, with
\begin{align}
&  |b^{(k)}(r)| \les 1 \quad\forall\;k\ge0, \quad n\text{\ \
odd} \label{eq:bodd}\\
&\left. \begin{array}{lll} |b^{(k)}(r)| &\les 1 &
\forall\;0\le k< n-2 \\
|b^{(k)}(r)| &\les r^{n-k-2}|\log r| &\forall\;k\ge n-2
\end{array}\right\}
 \; n\ge 2\text{\ \ even} \label{eq:beven}
\end{align}
for all $r>0$. 
In dimension $n=2$ we will need the following more detailed expansion of $b$
\be \label{eq:2db}
b(r) = \bigg(-\frac{1}{2\pi}\big(\log(r/2) + \gamma\big) + \frac{i}{4}\bigg)  + \mathcal E(r),
\ee
where  
$$
	\big|  \mathcal E^{(k)}(r) \big| \les \bigg| \frac{d^k}{dr^k} (r^2\log r) \bigg| \qquad k=0,1,2.
$$

In order to gain sharp control over the scaling behavior as $\lambda \to \infty$
 we discuss  the $\sigma=\frac12$ endpoint of the limiting absorption principle.   As in Chapter~XIV of~\cite{Hor} define
\[
\|f\|_{B}:=\sum_{j=0}^\infty 2^{\frac{j}{2}} \|f\|_{L^2(D_j)}, \quad
\|f\|_{B^*} := \sup_{j\ge0} 2^{-\frac{j}{2}} \|f\|_{L^2(D_j)},
\]
where   $D_j=\{ x\::\: |x|\sim 2^j\}$ for $j\ge1$ and $D_0=\{|x|\le 1\}$. 
For each $\sigma > \frac12$, there are containment relations $L^2_\sigma \subset B$
and $B^* \subset L^2_{-\sigma}$.
It is known that $\|R_0(1)\|_{B\to B^*} <\infty$. 

Note that
\be\label{eq:BstarB}
\|Vf\|_B\lesssim \|f\|_{B^*} \sum_{j=0}^\infty 2^j \|V\|_{L^\infty(D_j)} \les \|\la x\ra^{1+} V\|_{L^\infty} \|f\|_{B^*}.
\ee
Also recall that, by Lemma~3.1 in \cite{EGS2}, we have the following scaling relations for any $\lambda\ge 1$ 
  \[
\|f( \lambda^{-1}\cdot) \|_B \les \lambda^{\frac{n+1}{2}}
\|f\|_B,\quad \|g(\lambda \cdot) \|_{B^*} \les
\lambda^{-\frac{n-1}{2}} \|g\|_{B^*},
  \]
provided the right-hand sides are finite.  This   and  \eqref{eq:Rscale} immediately
imply the following statement. In what follows, $R_0$ stands for
either of $R_0^{\pm}$.

\begin{prop}
  \label{prop:R0scale}    For all $\lambda\ge1$, we have 
  \[
\|R_0(\lambda^2)\|_{B\to B^*} \les \lambda^{-1} \|R_0(1)\|_{B\to
B^*}.
  \] 
\end{prop}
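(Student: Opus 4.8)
The plan is to derive the estimate directly from the kernel scaling identity \eqref{eq:Rscale} together with the two norm-dilation bounds for $B$ and $B^*$ quoted from Lemma~3.1 in \cite{EGS2}; no genuinely new estimate is required, only careful bookkeeping of the powers of $\lambda$.

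First I would fix $\lambda\ge 1$ and a test function $f$ and make the change of variables explicit. Writing $(R_0(\lambda^2)f)(x)=\int R_0(\lambda^2)(x,y)f(y)\,dy$, inserting \eqref{eq:Rscale}, and then substituting $z=\lambda y$, one obtains
\[
(R_0(\lambda^2)f)(x)=\lambda^{-2}\,\big(R_0(1)\big[f(\lambda^{-1}\cdot)\big]\big)(\lambda x).
\]
Equivalently, with $g:=f(\lambda^{-1}\cdot)$ and $h:=R_0(1)g$, we have $R_0(\lambda^2)f=\lambda^{-2}\,h(\lambda\,\cdot)$.

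Next I would take the $B^*$ norm of both sides and collect the constants. The second dilation bound gives $\|h(\lambda\,\cdot)\|_{B^*}\les \lambda^{-\frac{n-1}{2}}\|h\|_{B^*}$; the definition of the operator norm gives $\|h\|_{B^*}\le \|R_0(1)\|_{B\to B^*}\,\|g\|_B$; and the first dilation bound gives $\|g\|_B=\|f(\lambda^{-1}\cdot)\|_B\les \lambda^{\frac{n+1}{2}}\|f\|_B$. Multiplying these together with the prefactor $\lambda^{-2}$ and using $-2-\frac{n-1}{2}+\frac{n+1}{2}=-1$ yields
\[
\|R_0(\lambda^2)f\|_{B^*}\les \lambda^{-1}\,\|R_0(1)\|_{B\to B^*}\,\|f\|_B,
\]
and taking the supremum over $\|f\|_B=1$ is precisely the claim.

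There is no real obstacle here, and indeed the text above already signals that the statement follows immediately. The only points to keep straight are that the dilation estimates from \cite{EGS2} are valid exactly in the range $\lambda\ge 1$ of the proposition, and that the previously recalled finiteness of $\|R_0(1)\|_{B\to B^*}$ is what makes the right-hand side finite; the identical computation applies verbatim to $R_0^+$ and to $R_0^-$, since \eqref{eq:Rscale} holds for both.
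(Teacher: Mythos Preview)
Your argument is correct and follows the paper's own proof essentially line for line: derive the scaling identity $(R_0(\lambda^2)f)(x)=\lambda^{-2}\big(R_0(1)[f(\lambda^{-1}\cdot)]\big)(\lambda x)$ from \eqref{eq:Rscale}, then apply the two dilation bounds for $B$ and $B^*$ and collect the exponent $-2-\tfrac{n-1}{2}+\tfrac{n+1}{2}=-1$. Your write-up is in fact slightly cleaner than the paper's, which compresses the last two steps into a single line.
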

\begin{proof}
  First, from \eqref{eq:Rscale}
  \[
 (R_0^+(\lambda^2) f)(x) = \lambda^{-2} [R_0^+(1)
 f(\cdot\lambda^{-1})] (\lambda x)
  \]
  Hence, by the previous lemma,
  \[
\|R_0^+(\lambda^2) f\|_{B^*} \les \lambda^{-2}
\lambda^{-\frac{n-1}{2}} \| R_0^+(1)
 f(\cdot\lambda^{-1}) \|_{B^*}
 \les \lambda^{-1} \|R_0^+(1)f\|_{B^*}
  \]
  as claimed.
\end{proof}

\section{Energies close to $\{-m,m\}$} \label{sec:small}

In this section, assuming the regularity of  threshold energies, we prove 
Theorem~\ref{thm:limap} when the spectral parameter $\lambda$ is sufficiently close to the threshold energy $\lambda=m$, respectively $\lambda=-m$.  We consider the positive portion of the spectrum $[m,\infty)$, the negative part can be controlled similarly.
That is,  for sufficiently small $\lambda_0$ 
\begin{align}\label{eq:sm1}
&\sup_{m<\lambda<\lambda_0} \|w (\mR^+_V(\lambda)-\mR^-_V(\lambda)) w \|_{2\to2} < \infty,
\end{align}
where $w=\la x\ra^{-\sigma} $ for   $\sigma >1$. In fact, we prove that
\begin{align}\label{eq:sm2}
&\sup_{m<\lambda<\lambda_0} \|w  \mR^\pm_V(\lambda)  w \|_{2\to2} < \infty,
\end{align}
provided that $\sigma > 1$ when $m > 0$, and provided $\sigma > \frac12$ when $m=0$.
A similar statement holds for negative energies. 

We refer to reader to \cite{EGDirac2d} for the case  $n=2$ and $m>0$, as this argument is substantially different from the other cases.  In all the remaining cases we have
\begin{multline*}
  \|w \mR^\pm_V(\lambda) w \|_{2\to2}  =  \|w (1+\mR_0^\pm(\lambda )V)^{-1}w^{-1} w \mR^\pm_0(\lambda )
w
 \|_{2\to2} \\
 \le  \|w (1+\mR_0^\pm(\lambda )V)^{-1}w^{-1} \|_{2\to2}  \| w \mR^\pm_0(\lambda )
w
 \|_{2\to2},
\end{multline*}
so it suffices to show that
\begin{align}\label{eq:Z0small}
&\sup_{m< \lambda<\lambda_0}  \|w (1+\mR_0^\pm(\lambda )V)^{-1}w^{-1} \|_{2\to2} <\infty, \\
&\sup_{m< \lambda<\lambda_0} \| w \mR^\pm_0(\lambda )w\|_{2\to2} < \infty. \label{eq:R0small}
\end{align}
In dimensions $n\geq 3$, \eqref{eq:R0small}  is an immediate consequence of the fact that the free Dirac operator is regular at the threshold, provided $\sigma > 1$.  We will show below that \eqref{eq:R0small} is also true when $m=0$ and $n\geq 2$ with
$\sigma > \frac12$.  The $n=2$ case is somewhat surprising because the threshold is not regular for the free Schr\"odinger operator, nor for the free Dirac operator with $m > 0$.

In dimensions $n\geq 3$, let $G=\mR_0^\pm(m)= (D_m+m)R_0(0)$.  In the case $n=2$, $m=0$, define $G= D_0 G_0$,
where 
\begin{equation}
	G_0f(x)=-\frac{1}{2\pi}\int_{\R^2} \log|x-y|f(y)\,dy \label{G0 def}. 
\end{equation}
Let $B_\lambda^\pm=\mR_0^\pm(\lambda)-G$. We assume that the threshold $m$ is a regular point of the spectrum, namely the boundedness of the operators
\be
\label{eq:z0GL}  w(I + GV)^{-1}w^{-1} = (I+wGVw^{-1})^{-1}: L^2\to L^2.
\ee
By a standard Fredholm alternative argument, \eqref{eq:z0GL} is equivalent to the absence of resonances and eigenfunctions at $m$.  
We now prove that  
under suitable conditions
\be 
\label{eq:z0BL}  \|wB^\pm_\lambda Vw^{-1}\|_{2\to2} \les \|wB^\pm_\lambda  w \|_{2\to2}  \rightarrow
0\;\;\;\;\;\text{ as } \lambda \to m^+.
\ee
This and \eqref{eq:z0GL} imply \eqref{eq:Z0small} by summing the Neumann series directly,
and it implies \eqref{eq:R0small} since $w G w$ is $L^2$-bounded for $\sigma > 1$ if $m > 0$
and $\sigma > \frac12$ if $m=0$.

To prove  the   bound  \eqref{eq:z0BL} recall the  properties of the kernel of $B^\pm_\lambda$:
with $\lambda =\sqrt{m^2+z^2}$, we have 
\begin{multline}\label{eq:Bexp}
B_\lambda^\pm=\mR_0^\pm(\lambda)-G=\big(D_m+\sqrt{m^2+z^2}\big)R_0(z^2) - (D_m+m )R_0(0) \\
=\big(\sqrt{m^2+z^2} -m\big)R_0(z^2) + m (\beta+I) \big[R_0(z^2)- R_0(0)\big] -i\alpha\cdot\nabla \big[R_0(z^2)- R_0(0)\big]
\end{multline}
in dimensions $n \geq 3$.  When $m=0$ we have
\be \label{eq:Bexpm0}
B_\lambda^{\pm} = zR_0(z^2) - i\alpha \cdot \nabla\big[R_0(z^2) - R_0(0)\big] 
\ee
and this holds for $n=2$ by replacing $R_0(0)$ with $G_0$.

By the limiting absorption principle for the free Schr\"odinger operator, the second summand of \eqref{eq:Bexp} goes  to zero as $z\to 0$ as  operators from   $L^2_\sigma $ to $L^2_{-\sigma}$,   provided that $\sigma>1$, see \eqref{eq:lapScunif}. This can also be proved using the limiting absorption bound \eqref{eq:lapSc} at frequency 1 and scaling, similar to the remaining cases that we discuss below.  The remaining terms are identical to those in \eqref{eq:Bexpm0} or better, since $0\leq \sqrt{m^2 + z^2} - m \leq z$.  We will prove that both terms of \eqref{eq:Bexpm0} go to zero for $\sigma>\frac12$ for dimensions $n\geq 2$.

For the first term, using the scaling relation \eqref{eq:Rscale} and the representation \eqref{eq:Hsplit} we have 
$$
z R_0(z^2) 
=    zR_0(z^2)(x,y)\widetilde\chi(z|x-y|) + z\frac{b(z|x-y|) }{|x-y|^{n-2}},
$$
where $\widetilde\chi$ is a smooth cutoff for the complement of the unit ball.
Using \eqref{eq:bodd} and \eqref{eq:beven}, the low energy term  can be bounded as follows
$$\Big|   z\frac{b(z|x-y|) }{|x-y|^{n-2}}   \Big| \les z\frac{(z|x-y|)^{0-}}{|x-y|^{n-2}} \chi(z|x-y|)\les z\frac{(z|x-y|)^{-1+}}{|x-y|^{n-2 }}= \frac{z^{0+}}{|x-y|^{n-1- }}. 
$$
By the weighted version of the Schur's test, this operator is $O(z^{0+}) $ as $z\to 0$ as an operator from   $L^2_\sigma $ to $L^2_{-\sigma}$,   provided that $\sigma>1/2$. 
We can rewrite the high energy term using the scaling relation \eqref{eq:Rscale}:
$$
 zR_0(z^2)(x,y)\widetilde\chi(z|x-y|) = z^{n-1}   \big[R_0(1)\widetilde \chi\big](zx,zy).
$$
Therefore, with $\chi$ be a smooth cutoff for $1/10$ neighborhood of the origin,
\begin{align*}
\big\|\la x\ra^{-\sigma}  z\Big[R_0(z^2)&(x,y)\widetilde\chi(z|x-y|) \Big] \la y\ra^{-\sigma}\big\|_{L^2\to L^2} \\ 
&\les  z^{-1}\big\| \la x/z\ra^{-\sigma}   \big[R_0(1)\widetilde \chi\big](x,y) \la y/z\ra^{-\sigma}\big\|_{L^2\to L^2}\\ 
&\les z^{2\sigma-1} \big\| (z+|x|)^{-\sigma}   \big[R_0(1)\widetilde \chi\big](x,y) (|y|+z)^{-\sigma}\big\|_{L^2\to L^2} \\
&\les  z^{2\sigma-1} \big\| \la x\ra^{-\sigma}   \big[R_0(1)\widetilde \chi\big](x,y) \la y\ra^{-\sigma}\big\|_{L^2\to L^2} \\
& \qquad + z^{2\sigma-1} \big\| (z+|x|)^{-\sigma} \chi(|x|)   \big[R_0(1)\widetilde \chi\big](x,y) \la y\ra^{-\sigma}\big\|_{L^2\to L^2}\\
& \qquad \qquad + z^{2\sigma-1} \big\| \la x\ra^{-\sigma}    \big[R_0(1)\widetilde \chi\big](x,y) (z+|y|)^{-\sigma} \chi(|y|) \big\|_{L^2\to L^2}.
\end{align*}
The first summand converges to zero provided that $\sigma>\frac12$ by the limiting absorption bound for the free Schr\"odinger operator for $\lambda=1$. 
Using the representation \eqref{eq:Hsplit} and the bound \eqref{eq:a}, and considering the Hilbert Schmidt norms, the second and third summands can be bounded by the square root of 
$$
z^{4\sigma-2}\int \la x\ra^{-2\sigma}\frac{1}{\la x-y\ra^{ n-1} } (z+|y|)^{-2\sigma} \chi(|y|) dx dy \les z^{0+} \to 0,
$$
provided that $\sigma>\frac12$.

 We now consider  the second summand in \eqref{eq:Bexpm0}.  In dimensions $n \geq 3$ we may use the scaling relation \eqref{eq:Rscale} and the representation \eqref{eq:Hsplit} to write 
$$
 \nabla \big[R_0(z^2)- R_0(0)\big] 
= \nabla \Big[R_0(z^2)(x,y)\widetilde\chi(z|x-y|) + \frac{b(z|x-y|)-b(0)}{|x-y|^{n-2}}  \Big],
$$
where $\widetilde\chi$ is a smooth cutoff for the complement of the unit ball.
Using \eqref{eq:bodd} and \eqref{eq:beven}, the low energy term  can be bounded as follows
$$\Big| \nabla  \frac{b(z|x-y|)-b(0)}{|x-y|^{n-2}}   \Big| \les \frac{z^{0+}}{|x-y|^{n-1-}}, 
$$ 
which goes to zero as $z\to 0$ as an operator from   $L^2_\sigma $ to $L^2_{-\sigma}$,   provided that $\sigma>1/2$.  If $n = 2$ we use \eqref{eq:2db}  
to claim an analogous bound
\begin{equation*}
\big| \nabla\big[ b(z|x-y|) - G_0\big]  \big| \les \frac{z^{0+}}{|x-y|^{1-}}.
\end{equation*}
Turning our attention to the high energy term, we use the scaling relation \eqref{eq:Rscale} to write 
$$
\nabla \Big[R_0(z^2)(x,y)\widetilde\chi(z|x-y|) \Big]= z^{n-1} \nabla\big[R_0(1)\widetilde \chi\big](zx,zy).
$$
Therefore
\begin{align*}
\big\|\la x\ra^{-\sigma}\nabla \Big[R_0(z^2)&(x,y)\widetilde\chi(z|x-y|) \Big] \la y\ra^{-\sigma}\big\|_{L^2\to L^2} \\ 
&\les  z^{-1}\big\| \la x/z\ra^{-\sigma}  \nabla\big[R_0(1)\widetilde \chi\big](x,y) \la y/z\ra^{-\sigma}\big\|_{L^2\to L^2}\\ 
&\les z^{2\sigma-1} \big\| (z+|x|)^{-\sigma}  \nabla\big[R_0(1)\widetilde \chi\big](x,y) (|y|+z)^{-\sigma}\big\|_{L^2\to L^2} \\
&\les  z^{2\sigma-1} \big\| \la x\ra^{-\sigma}  \nabla\big[R_0(1)\widetilde \chi\big](x,y) \la y\ra^{-\sigma}\big\|_{L^2\to L^2} \\
&\qquad+ z^{2\sigma-1} \big\| (z+|x|)^{-\sigma} \chi(|x|)  \nabla\big[R_0(1)\widetilde \chi\big](x,y) \la y\ra^{-\sigma}\big\|_{L^2\to L^2}\\
&\qquad\qquad+ z^{2\sigma-1} \big\| \la x\ra^{-\sigma}   \nabla\big[R_0(1)\widetilde \chi\big](x,y) (z+|y|)^{-\sigma} \chi(|y|) \big\|_{L^2\to L^2},
\end{align*}
where $\chi$ is a smooth cutoff for $1/10$ neighborhood of the origin.
The first summand converges to zero provided that $\sigma>\frac12$ by the limiting absorption bound for the free Schr\"odinger operator for $\lambda=1$. 
Using the representation \eqref{eq:Hsplit} and the bound \eqref{eq:a}, and considering the Hilbert Schmidt norms, the second and third summands can be bounded by the square root of 
$$
z^{4\sigma-2}\int \la x\ra^{-2\sigma}\frac{1}{\la x-y\ra^{ n-1} } (z+|y|)^{-2\sigma} \chi(|y|) dx dy \les z^{0+} \to 0,
$$
provided that $\sigma>\frac12$.

%The rest of the paper is devoted to the proof of Theorem~\ref{thm:limap} for large energies.
%Let us first consider intermediate energies, i.e.,
%$\lambda\in I:= [\lambda_0,\lambda_1]\subset (-\infty,-m) \cup (m,\infty)$. Then it was shown in \cite{GM}, see Theorem 1.6, %that the resolvent of $H$ satisfies the limiting absorption principle uniformly in $\lambda$: 
%\[
%\sup_{\lambda\in I}\|\la x\ra^{-\sigma} 
%\mR_V^\pm(\lambda) \la x\ra^{-\sigma}  \|_{L^2\to L^2} \le C_I,
%\]
%provided that there are no embedded eigenvalues, $\sigma>\frac12$, and $|V(x)| \les \la x\ra^{-1-}$. 

\section{The high energies limiting absorption principle}
\label{sec:high_limap}

Let us briefly consider intermediate energies, i.e.,
$\lambda\in I:= [\lambda_0,\lambda_1]\subset (-\infty,-m) \cup (m,\infty)$. It was shown in \cite{GM}, see Theorem 1.6, that the resolvent of $H$ satisfies the limiting absorption principle uniformly in $\lambda$: 
\[
\sup_{\lambda\in I}\|\la x\ra^{-\sigma} 
\mR_V^\pm(\lambda) \la x\ra^{-\sigma}  \|_{L^2\to L^2} \le C_I,
\]
provided that there are no embedded eigenvalues, $\sigma>\frac12$, and $|V(x)| \les \la x\ra^{-1-}$. 

In this section we complete the proof of Theorem~\ref{thm:limap} by considering energies sufficiently far from threshold, in the non-compact interval $|\lambda| \in (\lambda_1, \infty)$. 
In other words, we establish a limiting absorption principle
for the perturbed Dirac resolvent $\mR_V^\pm (\lambda)$ at high energies:
\be\label{eqn:lap}
			\sup_{|\lambda|>\lambda_0}\| \mR_V^\pm (\lambda)\|_{L^{2,\sigma}\to L^{2,-\sigma}} \les 1,
			\qquad \sigma > \f12. 
\ee  
In fact we control the slightly stronger operator norm from $B$ to $B^*$, and show that embedded eigenvalues are absent in this part of the spectrum.

Recall that (with $z=\sqrt{\lambda^2-m^2}$) we have  
\be \label{eq:res_iden}
\mR_V^\pm (\lambda)= \mR_0^\pm(\lambda )\big[I + V\mR_0^\pm(\lambda)\big]^{-1}\\ 
= \mR_0^\pm(\lambda ) \big[I + L_z R_0^\pm(z^2)\big]^{-1},
\ee
where 
$$
L_z= V (D_m+\lambda).
$$
Scaling arguments nearly identical to Proposition~\ref{prop:R0scale} show that 
$\|\mR_0^\pm(\lambda)\|_{B \to B^*} \les 1$ for all $\lambda \geq \lambda_0 > m$.
 
Since multiplication by $V$ maps $B^*$ to $B$ (see \eqref{eq:BstarB}), the limiting absorption principle for $R_0$ implies a bound
$\|L_zR_0^\pm(z^2)\|_{B \to B} < C\|\la x\ra^{1+}V\|_{L^\infty}$ uniformly in $z \ge z_0 > 0$.  If $V$ is sufficiently small then the operator norm of $L_zR_0^\pm(z^2)$ is less than $1$ for all $z \geq z_0$.  Then one can conclude
$$
\sup_{z \geq z_0} \big\| \big[I + L_z R_0^\pm(z^2)\big]^{-1}\big\|_{B\to B} \lesssim 1.
$$ 

The main goal of this section is to show that the same bound on $[I + L_zR_0^\pm(z^2)]^{-1}$ holds even when $V$ is not
small. This cannot be proved directly from the size of $L_zR_0^\pm(z^2)$, which need not become small as $z \to \infty$.
Instead, the following crucial lemma shows that the Neumann series
\begin{equation}\label{eq:geomser}
\big[I + L_z R_0^\pm(z^2)\big]^{-1} = \sum_{\ell=0}^\infty (-1)^\ell
\big(L_z R_0^\pm(z^2)\big)^\ell
\end{equation}
is absolutely convergent for large $z$ due to the behavior of later terms in the series.

\begin{lemma} \label{lem:largem}
Assume the entries of  $V$ are continuous and satisfy $|V_{ij}(x)|\les \la x\ra^{-1-}$. There
exist sufficiently large $M = M( V)$ and $z_1 =
z_1( V)$ such that
\begin{equation}  \label{eq:smallprod}
  \sup_{z>z_1}\| (L_z R_0^\pm(z^2))^M \|_{B \to B} \le \f12.
\end{equation}
 \end{lemma}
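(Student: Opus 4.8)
The plan is to exploit the oscillatory part of the free Schr\"odinger kernel, together with the decay of $V$, to show that high iterates of $L_z R_0^\pm(z^2)$ gain smallness even though a single factor does not. Write $L_z R_0^\pm(z^2) = V(D_m + \lambda) R_0^\pm(z^2)$ and split the resolvent kernel using the scaling relation \eqref{eq:Rscale} and the representation \eqref{eq:Hsplit}, separating the oscillatory tail $\frac{e^{iz|x-y|}}{(z|x-y|)^{(n-1)/2}} a(z|x-y|)$ from the local singular part $\frac{b(z|x-y|)}{|x-y|^{n-2}}$. The local part is supported where $z|x-y| \lesssim 1$, i.e.\ where $|x-y| \lesssim z^{-1}$; using the bounds \eqref{eq:bodd}--\eqref{eq:beven} on $b$ one checks that its contribution to $\|L_z R_0^\pm(z^2)\|_{B\to B}$ is $O(z^{0-})$, hence genuinely small for large $z$, so only the oscillatory part needs serious work. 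For the derivative term $(D_m+\lambda)$ acting on the oscillatory part, differentiation of $e^{iz|x-y|}$ produces a factor of $z$, which is exactly compensated by the extra $z^{-1}$ decay of $\mathcal R_0^\pm$ established in Proposition~\ref{prop:R0scale}; the operator $D_m + \lambda$ applied to the amplitude $a$ only improves decay by \eqref{eq:a}. So after this reduction the problem becomes: show that the $M$-fold composition of operators with kernel essentially $V(x)\, \frac{e^{\pm iz|x-y|}}{\langle z(x-y)\rangle^{(n-1)/2}}\, \tilde a(z|x-y|)$ has $B\to B$ norm at most $\frac12$ for some $M=M(V)$ uniformly in large $z$.

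Next I would set up the iterated integral for $(L_z R_0^\pm(z^2))^M$, whose kernel is a product of $M$ oscillatory factors $e^{\pm i z(|x_0-x_1| + |x_1-x_2| + \cdots + |x_{M-1}-x_M|)}$ against $M$ copies of $V$ and $M$ decaying amplitudes. The key mechanism — this is precisely the argument of \cite{EGS,EGS2} that the introduction advertises — is that the phase $\Phi = |x_0-x_1| + \cdots + |x_{M-1}-x_M|$ is stationary in the intermediate variables $x_1,\dots,x_{M-1}$ only on the "collinear" set where the points lie in order on a line. Away from a neighborhood of that set, repeated integration by parts in the intermediate variables gains negative powers of $z$; on the collinear set, one uses instead the decay of the $M$ potential factors $V(x_1)\cdots V(x_{M-1})$, and the point is that when $M$ is large the collinear configuration is so constrained that the volume of the near-collinear region (measured in the weights defining $B$ and $B^*$) shrinks geometrically in $M$. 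Concretely I would dyadically decompose each $|x_{k-1}-x_k|$ and each $|x_k|$, reduce to a finite sum of pieces, and on each piece either integrate by parts (non-stationary regime) or estimate crudely using $\sum 2^j \|V\|_{L^\infty(D_j)} < \infty$ as in \eqref{eq:BstarB} (stationary regime), arranging that the total is bounded by $C^M$ times a quantity that beats $C^M$ — e.g.\ a factor like $\epsilon^M$ coming from the decay of $V$, or a combinatorial gain — so that for $M$ large enough and $z > z_1(V)$ the bound $\frac12$ holds.

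I expect the main obstacle to be the bookkeeping of the stationary-phase/integration-by-parts dichotomy uniformly in $z$ and in the number of factors $M$: one must track how many derivatives land on the amplitudes $a$ versus on the $e^{iz\Phi}$, ensure the constants produced at each integration by parts do not grow faster than the decay gained, and handle the endpoints of the dyadic sums (small $|x_{k-1}-x_k|$, where the local part of the kernel interacts with the oscillatory part) and the possibility that consecutive points nearly coincide. The role of the $B\to B$ norm rather than a weighted $L^2$ norm is essential here and adds a layer of care: the outermost and innermost variables $x_0, x_M$ are paired against $B$ and $B^\ast$ weights respectively, and the scaling relations $\|f(\lambda^{-1}\cdot)\|_B \lesssim \lambda^{(n+1)/2}\|f\|_B$, $\|g(\lambda\cdot)\|_{B^\ast}\lesssim \lambda^{-(n-1)/2}\|g\|_{B^\ast}$ from \cite{EGS2}, already used in Proposition~\ref{prop:R0scale}, must be applied at each of the $M$ stages so that the net power of $z$ is nonpositive. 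Once the uniform bound \eqref{eq:smallprod} is in hand, summing the Neumann series \eqref{eq:geomser} in blocks of length $M$ gives $\sup_{z>z_1}\|[I+L_zR_0^\pm(z^2)]^{-1}\|_{B\to B} \lesssim 1$, and combined with $\|\mathcal R_0^\pm(\lambda)\|_{B\to B^\ast}\lesssim 1$ and the intermediate-energy bound of \cite{GM} this completes the high-energy limiting absorption principle \eqref{eqn:lap}, hence Theorem~\ref{thm:limap}.
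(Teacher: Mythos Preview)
Your proposal captures the correct dichotomy --- stationary (near-collinear) versus non-stationary contributions of the iterated phase $\sum_k |x_{k-1}-x_k|$ --- and this is indeed the moral structure of the paper's argument. But the mechanism you offer for the stationary region has a genuine gap. The claim that ``the volume of the near-collinear region \ldots\ shrinks geometrically in $M$'' is not correct: a tube around the collinear set has no reason to shrink in $M$. The alternative you mention, ``a factor like $\epsilon^M$ coming from the decay of $V$'', is the right idea but you have not explained \emph{why} the product $\prod_k V(x_k)$ is small on the near-collinear set --- each individual factor $V(x_k)$ is merely bounded, not small. The paper's missing insight is this: after an angular partition of unity on $S^{n-1}$ with caps of size $\delta \sim 1/M$, the near-collinear (``directed'') products have all unit vectors $(x_{k-1}-x_k)/|x_{k-1}-x_k|$ confined to a single hemisphere, so each factor $R_{i_k}(z^2)$ \emph{translates supports monotonically} in one fixed direction by at least $\sim d$. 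Hence after $O(A/d)$ steps the support has left the ball $\{|x| \le A\}$, and the improved bound $\|V f\|_B \les \la A\ra^{0-}\|f\|_{B^*}$ applies to every subsequent factor; this is what manufactures the geometric gain $r^M$ (Lemma~\ref{lem:directedprod}). Your dyadic sketch does not contain this translation-of-support step, and without it there is no source for the $\epsilon^M$.

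There is also a secondary gap: your integration-by-parts argument for the non-stationary region differentiates $V$, but $V$ is only assumed continuous. The paper handles this by first proving the undirected-product estimate for $V \in C_c^\infty$ (Lemma~\ref{lem:nonstatphase}) and then approximating in the $B^*\to B$ norm (Lemma~\ref{lem:RiemannLebesgue}); this approximation costs the quantitative rate in $z$, which is why Lemma~\ref{lem:largem} only asserts a bound for $z > z_1(V)$ with no explicit decay. Finally, the paper's angular decomposition $R_0 = \sum_i R_i + R_d$ with $d$ \emph{fixed} (not $d = z^{-1}$ as you take it) is precisely the organizational device that makes the directed/undirected split sharp and the combinatorics tractable: there are only $\delta^{1-n} C_n^M$ directed products (Lemma~\ref{lem:combinatorics}), and this is beaten by $r^M$. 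Your scheme would need an analogous device to separate the two regimes cleanly.
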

 
Assuming for the moment that \eqref{eq:smallprod} holds, the operator inverse in \eqref{eq:res_iden} is bounded uniformly in $z > z_1$, thus we conclude \eqref{eqn:lap} for $\lambda_0=\lambda_0(V)$ sufficiently large.  
The remainder of this section is devoted to the proof of
Lemma~\ref{lem:largem}.    The method will be similar to the one in \cite{EGS2}.

\subsection{The directed free resolvent}

The first step is to decompose the free Schr\"odinger resolvent into a large number of pieces according to 
the size of $|x-y|$ and where $\frac{x-y}{|x-y|}$ lies on the unit sphere.  
This section presents a limiting absorption estimate for these truncated free
resolvent kernels and for their first-order derivatives. The constants will not depend
on the parameters of truncation, which gives us the freedom
to choose those values later on.
Similar estimates were obtained in \cite{EGS2} in dimensions $n\geq 3$, with derivatives of up to second order.
%In dimension 2 slightly weaker bounds hold which are sufficient for our purposes.
We emphasize here the steps where $n \geq 2$ and the number of derivatives are
most prominent, and refer the reader to \cite{EGS2} for technical details that are shared by
both arguments.

For any $\delta\in(0,1)$, let $\Phi_\delta$ be a smooth cut-off
function to a $\delta$-neighborhood of the north pole in $S^{n-1}$.
Also, for any $d\in(0,\infty)$,  $\eta_d(x)=\eta(|x|/d)$ denotes a smooth
cut-off to the set $|x|>d$. In what follows, we shall use the
notation
\[
R_{d,\delta}(\lambda^2)(x,y)=[R_0(\lambda^2) \eta_d \Phi_\delta]
(x,y) = R_0(\lambda^2)(x,y) \eta_d(|x-y|)
\Phi_\delta\Big(\frac{x-y}{|x-y|}\Big).
\]
Note that this operator obeys the same scaling as $R_0$,
see~\eqref{eq:Rscale}. More precisely,
\[
R_{d,\delta}(\lambda^2)(x,y) = \lambda^{n-2}
R_{d\lambda, \delta}(1)(\lambda x,\lambda y).
\]
Thus, Proposition~\ref{prop:R0scale}  applies to
$R_{d,\delta}(\lambda^2)$ in the form
\begin{equation}\label{eq:calRscale}
\|R_{d,\delta}(\lambda^2)\|_{B\to B^*} \les \lambda^{-1}
\|R_{d\lambda, \delta}(1)\|_{B\to B^*}
\end{equation}
for all $\lambda\ge1$ or, more generally,
\begin{equation}\label{eq:dercalRscale}
\| D^{\alpha}R_{d,\delta}(\lambda^2)\|_{B\to B^*} \les
\lambda^{-1+|\alpha|}
\|D^{\alpha}R_{d\lambda, \delta}(1)\|_{B\to B^*}
\end{equation}
for all multi-indices $\alpha$ and $\lambda\ge1$.

%The main goal of this section is to prove 
We sketch a proof of a limiting absorption
bound for $R_{d,\delta}$ and its derivatives of order at most one uniformly in the parameters $d,\delta\in(0,1)$, see
Proposition~\ref{prop:dir_res} below. This will be based on the
oscillatory integral estimate in Lemma~\ref{lem:VCP}, which was proved in \cite{EGS2} for $n\geq 3$ and $\frac{n-1}{2}\le p\le \frac{n+3}{2}$. The extension here to dimension $n=2$ with a smaller range of $p$ can be obtained from the proof of Lemma~3.4 in \cite{EGS2} by minor changes in the case analysis. More specifically, there is a step where one can replace the inequality $3(\frac{n-1}{2}) \geq \frac{n+3}{2}$ (which is true only if $n \geq 3$) with the slightly weaker bound $3(\frac{n-1}{2}) \geq \frac{n+1}{2}$. 
\begin{lemma}
  \label{lem:VCP} Let $\chi$  denote a
 smooth cut-off function to the
  region $1<|x|<2$. With $a(r)$ as in~\eqref{eq:a}, define
\begin{equation}\label{eq:pint}
(T_{\delta,p, R_1, R_2} f)(x) = \int \chi\big(\frac{x}{R_1}\big)
\frac{e^{i|x-y|}}{|x-y|^p}
   a(|x-y|)
  \Phi_\delta\Big(\frac{x-y}{|x-y|}\Big) \chi\big(\frac{y}{R_2}\big) f(y)\, dy.
\end{equation}
Then, for any $n\ge2$, and
  $\frac{n-1}{2}\le p\le \frac{n+1}{2}$,
\begin{equation}\label{eq:Tbound}
\| T_{\delta,p, R_1, R_2} f \|_2 \le C_n\, \delta^{p-\frac{n-1}{2}}
\sqrt{R_1 R_2}\, \|f\|_2
\end{equation}
  for all $R_1, R_2\ge1$, $\delta\in(0,1)$. The constant $C_n$ only
  depends on $n\ge2$.
\end{lemma}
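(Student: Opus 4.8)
The plan is to prove the oscillatory‑integral bound \eqref{eq:Tbound} by a $TT^*$ argument combined with the method of (non)stationary phase, exactly along the lines of Lemma~3.4 in \cite{EGS2}, paying attention only to the two places where the dimension and the exponent range enter. Write $K(x,y) = \chi(x/R_1)\,\frac{e^{i|x-y|}}{|x-y|^p}a(|x-y|)\Phi_\delta\big(\tfrac{x-y}{|x-y|}\big)\chi(y/R_2)$. First I would observe that, because of the cutoff $\Phi_\delta$, the vector $x-y$ is confined to a cone of aperture $\sim\delta$ around the north pole, and because of $a$ and the cutoffs $\chi$ we have $|x-y|\gtrsim 1$ and $|x|\sim R_1$, $|y|\sim R_2$. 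Form $T T^* g(x) = \int \Big(\int K(x,y)\overline{K(x',y)}\,dy\Big) g(x')\,dx'$; the inner integral is an oscillatory integral with phase $|x-y| - |x'-y|$ in the $y$ variable. One splits into the region where $|x-x'|$ is small (where one just estimates absolutely, using $|x-y|^{-p}$ and the $\delta$‑cone to control the measure of the $y$‑support) and the region where $|x-x'|$ is not small (where the gradient in $y$ of the phase is bounded below and one integrates by parts repeatedly). Summing the two regimes and applying Schur's test in $(x,x')$ gives the claimed $\sqrt{R_1R_2}$ and the $\delta$ power.

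\textbf{Where $n=2$ enters.} The factor $\sqrt{R_1R_2}$ comes from the volume of a slab of thickness $O(1)$ and transverse size $O(\delta)$ inside the ball of radius $R_i$, i.e. roughly $R_i^{(n-1)} \delta^{n-1}\cdot 1$ against the $|x-y|^{-2p}$ decay — the bookkeeping is identical for $n=2$. The one genuinely dimension‑sensitive inequality, as the authors note, is the elementary bound needed to close the case analysis: in \cite{EGS2} one uses $3\cdot\frac{n-1}{2}\ge\frac{n+3}{2}$, which holds only for $n\ge3$, to absorb error terms coming from three successive integrations by parts against the decay budget permitted by the largest admissible $p=\frac{n+3}{2}$. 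Here, since we only claim the smaller range $p\le\frac{n+1}{2}$, the weaker inequality $3\cdot\frac{n-1}{2}\ge\frac{n+1}{2}$ suffices, and this is true for all $n\ge2$ (it reads $3n-3\ge n+1$, i.e. $n\ge2$). So the plan is: reproduce the case split of \cite{EGS2} verbatim, and in each branch where the exponent $\frac{n+3}{2}$ was used, substitute $\frac{n+1}{2}$ and invoke $3\cdot\frac{n-1}{2}\ge\frac{n+1}{2}$; all the remaining estimates (non‑stationary phase, Schur, volume counting) go through unchanged.

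\textbf{The $\delta$ power.} The factor $\delta^{p-\frac{n-1}{2}}$ is the product of a gain $\delta^{n-1}$ from the angular localization (the $y$‑integral runs over a set of measure $O(\delta^{n-1})$ times a length) against a loss $\delta^{-(n-1)/2}\cdot\delta^{-(n-1)/2+p}$‑type bookkeeping from the derivative estimate; the precise accounting is the one in \cite{EGS2} and I would simply track the exponents, noting that since $p\ge\frac{n-1}{2}$ the power of $\delta$ is nonnegative so the bound is a genuine gain (or at worst neutral) as $\delta\to0$, which is all that is needed later.

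\textbf{Main obstacle.} I expect the only real subtlety — and it is a bookkeeping subtlety rather than a conceptual one — to be verifying that the reduced exponent range $p\le\frac{n+1}{2}$ really is enough to make every branch of the \cite{EGS2} case analysis close with the weaker inequality $3\cdot\frac{n-1}{2}\ge\frac{n+1}{2}$, i.e. that no branch secretly needed the full strength $p$ up to $\frac{n+3}{2}$ for a reason unrelated to that inequality. Since in this paper we only ever apply Lemma~\ref{lem:VCP} with $\frac{n-1}{2}\le p\le\frac{n+1}{2}$ (one fewer derivative than \cite{EGS2}, consistent with Proposition~\ref{prop:dir_res} handling only first‑order derivatives), this is exactly the range that survives, and the remaining details are routine modifications of \cite[Lemma 3.4]{EGS2}; we therefore only sketch them.
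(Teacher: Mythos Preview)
Your proposal is correct and matches the paper's own treatment essentially verbatim: the paper does not give a self-contained proof either, but refers to Lemma~3.4 of \cite{EGS2} and singles out exactly the same modification you identify, namely replacing the inequality $3\cdot\frac{n-1}{2}\ge\frac{n+3}{2}$ (valid only for $n\ge3$) by the weaker $3\cdot\frac{n-1}{2}\ge\frac{n+1}{2}$, which holds for all $n\ge2$ and suffices because the exponent range has been reduced to $p\le\frac{n+1}{2}$.
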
 

\begin{prop}
  \label{prop:dir_res}
  Let $n\ge2$. Then for any $d\in (0,\infty)$, $\delta\in (0,1)$, and
$\lambda\ge 1$ there is the bound
\begin{equation}\label{eq:limap}
\| D^{\alpha}R_{d,\delta}(\lambda^2)f \|_{B^*} \le
C_n\lambda^{-1+|\alpha|} \|f\|_B
\end{equation}
for any $0\le|\alpha|\le 1$. The constant $C_n$ depends only on the
dimension $n\ge2$.
\end{prop}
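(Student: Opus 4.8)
The plan is to pass to unit frequency via the scaling identity \eqref{eq:dercalRscale} and prove that, for all $d\in(0,\infty)$, $\delta\in(0,1)$, and $0\le|\alpha|\le1$,
\[
\|D^\alpha R_{d,\delta}(1)\|_{B\to B^*}\le C_n
\]
with $C_n$ independent of $d$ and $\delta$; by \eqref{eq:dercalRscale} this is precisely \eqref{eq:limap}. To handle the $B\to B^*$ norm I would decompose dyadically: writing $K:=D^\alpha R_{d,\delta}(1)$ and $K_{jk}(x,y):=\chi_j(x)K(x,y)\chi_k(y)$, with $\chi_j$ a smooth localization to $D_j$, the definitions of $\|\cdot\|_B$ and $\|\cdot\|_{B^*}$ reduce everything to the uniform block estimate $\|K_{jk}\|_{L^2\to L^2}\le C_n 2^{(j+k)/2}$ for all $j,k\ge0$. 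Indeed, granting this, $2^{-j/2}\|Kf\|_{L^2(D_j)}\le\sum_k2^{-j/2}\|K_{jk}\|_{L^2\to L^2}\|f\|_{L^2(D_k)}\les\sum_k2^{k/2}\|f\|_{L^2(D_k)}=\|f\|_B$, and the supremum over $j$ gives $\|Kf\|_{B^*}\les\|f\|_B$.

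Next I would split $R_0(1)(x,y)$ as in \eqref{eq:Hsplit} into an oscillatory ``far'' part $\frac{e^{i|x-y|}}{|x-y|^{(n-1)/2}}a(|x-y|)$ and a ``near'' part $\frac{b(|x-y|)}{|x-y|^{n-2}}$, carrying the cut-offs $\eta_d(|x-y|)\Phi_\delta\big(\frac{x-y}{|x-y|}\big)$ through both. For the near part and its first derivative, $b$ is supported in $|x-y|<\frac34$, so $K_{jk}$ is nonzero only for $|j-k|=O(1)$ and is dominated by convolution against a kernel $m_{d,\delta}$ supported in $\{|z|<\frac34,\ \frac{z}{|z|}\in\delta\text{-cap}\}$. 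The worst case is the derivative landing on $\Phi_\delta$, which produces $\frac{1}{\delta|x-y|}$ and hence $|m_{d,\delta}(z)|\les\delta^{-1}|z|^{-(n-1)}$ on that set; a polar-coordinate computation (the angular integral contributing $\delta^{n-1}$) gives $\|m_{d,\delta}\|_{L^1}\les\delta^{n-2}\les1$ for every $n\ge2$, with $n=2$ the borderline case, and the other placements of the derivative are no worse. Young's inequality then yields $\|K_{jk}\|_{L^2\to L^2}\les1\le2^{(j+k)/2}$ for the near part, uniformly in $d,\delta$.

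For the oscillatory part I would absorb $\eta_d\,a$ — together with the unit vector $\frac{x-y}{|x-y|}$ generated when $D^\alpha$ hits the phase — into a single amplitude, using that $|(\eta_d\,a)^{(k)}(r)|\les r^{-k}$ uniformly in $d$ since $\eta_d$ varies on the scale $r\sim d$ while $a$ lives on $r>\frac12$. Each dyadic block is then of the form $T_{\delta,p,R_1,R_2}$ from Lemma~\ref{lem:VCP} with $R_1=2^j\ge1$, $R_2=2^k\ge1$ and $p=\frac{n-1}{2}$ when no derivative falls or it falls on the phase (bound $\delta^0\sqrt{R_1R_2}$), while $p=\frac{n+1}{2}$ when the derivative falls on $\eta_d\,a$ (bound $\delta\sqrt{R_1R_2}$) or on $\Phi_\delta$, where the extra $\frac{1}{\delta|x-y|}$ both raises $p$ by one and contributes a prefactor $\delta^{-1}$, for a net bound $\delta^{-1}\cdot\delta\sqrt{R_1R_2}=\sqrt{R_1R_2}$. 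Blocks touching the unit ball $D_0$ are estimated directly by Hilbert--Schmidt norms. In all cases $\|K_{jk}\|_{L^2\to L^2}\les\sqrt{R_1R_2}=2^{(j+k)/2}$ uniformly in $d,\delta$, which is what the first paragraph requires; undoing the scaling via \eqref{eq:dercalRscale} finishes the proof.

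The only genuinely delicate point is the uniform-in-$\delta$ bound for the oscillatory blocks when $D^\alpha$ hits the angular cut-off: this forces the amplitude exponent to the endpoint $p=\frac{n+1}{2}$ of the range allowed in Lemma~\ref{lem:VCP}, and one must observe that the bump $\delta\,(D\Phi_\delta)\big(\frac{x-y}{|x-y|}\big)$ obeys the same support and derivative estimates as $\Phi_\delta$ itself (differentiating once trades a factor $\delta$ for $\delta^{-1}$). In dimension $n=2$ this $p=\frac{n+1}{2}$ endpoint is precisely what forces the weaker inequality $3(\frac{n-1}{2})\ge\frac{n+1}{2}$ in place of the $n\ge3$ version $3(\frac{n-1}{2})\ge\frac{n+3}{2}$; everything else reduces to bookkeeping with the scaling relations and Schur-/Young-type estimates.
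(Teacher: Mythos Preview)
Your proof is correct and follows the same overall architecture as the paper's: reduce to $\lambda=1$ by scaling, establish the dyadic block bound $\|\chi(\cdot/R_1)D^\alpha R_{d,\delta}(1)\chi(\cdot/R_2)\|_{2\to2}\le C_n\sqrt{R_1R_2}$, split the kernel via \eqref{eq:Hsplit} into the compactly supported piece $T_0$ (carrying $b$) and the oscillatory piece $T_1$, and control $T_1$ through Lemma~\ref{lem:VCP} with $p\in\{\frac{n-1}{2},\frac{n+1}{2}\}$, including the cancellation $\delta^{-1}\cdot\delta$ when the derivative lands on $\Phi_\delta$. The one substantive difference is your treatment of $T_0$: the paper computes the Fourier transform of the convolution kernel and proves the multiplier bound $|m_0(\xi)|\les\la\xi\ra^{-1}$, which handles all $|\alpha|\le1$ at once; you instead bound the $L^1$ norm of the kernel of $D^\alpha T_0$ directly in polar coordinates and invoke Young's inequality. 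Your route is more elementary and perfectly adequate here --- one small imprecision is that in dimension $n=2$ the pointwise kernel bound picks up an extra $|\log|z||$ factor from \eqref{eq:2db}, which you suppress; it is harmless for the $L^1$ estimate since $\int_0^{3/4}|\log r|\,dr<\infty$.
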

\begin{proof} In view of \eqref{eq:calRscale} and \eqref{eq:dercalRscale}
it suffices to prove this estimate  for $\lambda=1$.  We need to
prove that for any $0\le|\alpha|\le 1$
\begin{equation}\label{eq:star}
\| \chi(\cdot/R_1) D^\alpha R_{d,\delta}(1)\, \chi(\cdot/R_2) f
\|_2 \le C_n\, \sqrt{R_1 R_2}\, \|f\|_2
\end{equation}
where $R_1, R_2\ge 1$ are arbitrary.  We write
\begin{equation}\label{eq:T0T1}
R_{d,\delta}(1)=R_0^+(1) \eta_d\Phi_\delta   = T_0 + T_1
\end{equation}
where the kernels of $T_0, T_1$ are
\begin{equation}\label{eq:Tdef}\begin{split}
 T_0(x,y) &=
 \frac{b(|x-y|)}{|x-y|^{n-2}}
   \eta_d(|x-y|) \Phi_\delta(x,y),\\
T_1(x,y) &= \frac{e^{i|x-y|}}{|x-y|^{\frac{n-1}{2}}}
   \eta_d(|x-y|)a(|x-y|)
  \Phi_\delta(x,y),
  \end{split}
\end{equation}
respectively, see \eqref{eq:Hsplit}. The modified function $\eta_d(r)a(r)$
satisfies all decay estimates in~\eqref{eq:a} with constants independent of
the choice of $d$.

We begin by showing that
$\widehat{T_0 f} = m_0 \hat{f}$ where $|m_0(\xi)|\les \la
\xi\ra^{-1}$. This will imply~\eqref{eq:star} for~$T_0$. By
definition
\[
m_0(\xi) = \int_0^\infty\int_{S^{n-1}} rb(r)  \eta_d(r) e^{-ir
\omega\cdot\xi} \Phi_\delta(\omega) \,\sigma(d\omega) \,dr.
\]
Since $b(r)=0$ if $r>\f34$, $|m_0(\xi)|\les 1$. Hence we may assume
that $|\xi|\ge1$.  If $|\xi_n| \ge |\xi|/10$, then $|\omega \cdot \xi| \ges
|\xi|$ and
\[
|m_0(\xi)|\les \int_{S^{n-1}} \Phi_\delta(\omega) \la
\omega\cdot\xi\ra^{-2}\,\sigma(d\omega)\les \delta^{n-1} |\xi|^{-2},
\]
where we have used that
\[
\Big| \int_0^\infty e^{-ir\rho} rb(r)\eta_d(r)\chi(r)\,dr\Big |\les
\la \rho\ra^{-1}.
\]
This follows from \eqref{eq:bodd} and \eqref{eq:beven} after an 
integration  by parts. Now suppose that $|\xi_n| \le
|\xi|/10$. Set $\xi=|\xi| \hat{\xi}$ and change integration
variables as follows:
\begin{align*}
&\int_{S^{n-1}}\int_0^\infty rb(r)  \eta_d(r) \chi(r) e^{-ir|\xi|
\omega\cdot\hat{\xi}}\,dr\, \Phi_\delta(\omega) \,\sigma(d\omega)\\
& = \int_{\R^{n-1}} \int_0^\infty rb(r)  \eta_d(r) \chi(r)
e^{-ir|\xi| u_1}\,dr \,\tilde \Phi_\delta(u_1,\ldots,u_{n-1})\,
du_1du_2\ldots du_{n-1}\\
& = \delta^{n-2} \int_0^\infty \int_\R rb(r)  \eta_d(r) \chi(r)
e^{-ir|\xi| u_1}\Psi_\delta(u_1)\, du_1 dr,
\end{align*}
where $(u_1, \ldots, u_{n-1})$ is a parametrization of the support of
$\Phi_\delta$, aligning $u_1$ with $\hat{\xi}$.  The function $\Psi_\delta$
is a smooth cut-off supported on an interval of
length $\sim \delta$ resulting from the integration of $\tilde{\Phi}_\delta$.
Thus,
\[
|m_0(\xi)|\les \delta^{n-2}\int_0^1
 |\widehat{\Psi_\delta}(r|\xi|)|\,dr\les
\delta^{n-2}|\xi|^{-1}\| \widehat{\Psi_\delta}(u)\|_{L^1_u}\les
\delta^{n-2}|\xi|^{-1}.
\]
In conclusion, $|m_0(\xi)|\les \la \xi\ra^{-1}$ as claimed.

Next, consider $T_1$.  By the Leibniz rule,
\begin{align}
D^\alpha_x T_1(x,y) &= \sum_{\beta\le \alpha} c_{\alpha,\beta}\,
D^{\alpha-\beta}_x \Big[\frac{e^{i|x-y|}}{|x-y|^{\frac{n-1}{2}}}
  \eta_d(|x-y|) a(|x-y|)\Big] D^{\beta}_x
  \Phi_\delta(x,y) \nonumber\\
  & = \sum_{\beta\le \alpha} \delta^{-|\beta|}\,c_{\alpha,\beta}\,
\frac{e^{i|x-y|}}{|x-y|^{\frac{n-1}{2}+|\beta|}}
  a_{\alpha,\beta, d}(|x-y|)
  \Phi_{\delta,\beta}(x,y),\label{eq:newT1}
\end{align}
where $\Phi_{\delta,\beta}= \delta^{|\beta|} D^\beta \Phi_\delta$ is a
modified angular cut-off and $a_{\alpha,\beta, d}$ satisfies the same bounds
as $a$, see~\eqref{eq:a}, with constants that do not depend on $d$.
The estimate~\eqref{eq:star} for $T_1$ follows from Lemma~\ref{lem:VCP}
with $p=\frac{n-1}{2}+|\beta|$.
\end{proof}
 A partition of unity $\{\Phi_i\}$
over $S^{n-1}$ induces a directional decomposition of the free
resolvent, namely
\begin{equation} \label{eq:decomp}
R_0(\lambda^2) = \sum_{i} R_i(\lambda^2) + R_d(\lambda^2)
\end{equation}
where $R_i(\lambda^2) := R_{d,\delta}(\lambda^2)$ with
$\Phi_i$ playing the role of $\Phi_\delta$ from above. Moreover, $R_d(\lambda^2)(x)=(1 - \eta_d(|x|))R_0(\lambda^2)(|x|)$ is
the ``short range piece''.  We have the following $L^2$ bound for   $R_d(\lambda^2)$:

\begin{lemma} \label{lem:R_d}
With $R_d^+(\lambda^2)$ defined as above, the mapping estimate
\begin{equation} \label{eq:R_d}
\|D^\alpha R_d(\lambda^2)f \|_2 \le C_n\, \lambda^{-2+|\alpha|}
 \la d\lambda\ra \|f\|_2
\end{equation}
holds uniformly for every choice of $d \in (0,\infty)$, $0\le|\alpha|\le1$,
and $\lambda \ge 1$.
\end{lemma}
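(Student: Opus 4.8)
The plan is to reduce to the case $\lambda=1$ by scaling and then estimate the short-range kernel directly via Schur's test (or the Hilbert--Schmidt bound), keeping careful track of the two regimes $d\lambda \lesssim 1$ and $d\lambda \gtrsim 1$, since the cut-off $1-\eta_d$ localizes to $|x-y| \lesssim d$. Recall that $R_d(\lambda^2)(x,y) = (1-\eta_d(|x-y|))R_0(\lambda^2)(x,y)$, and by the scaling relation \eqref{eq:Rscale} we have $R_d(\lambda^2)(x,y) = \lambda^{n-2} R_{d\lambda}(1)(\lambda x, \lambda y)$ where $R_{d}(1)(x,y) = (1-\eta_{d}(|x-y|))R_0^+(1)(x,y)$; differentiating in $x$ introduces $|\alpha|$ extra powers of $\lambda$. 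Applying the change of variables $x \mapsto \lambda x$, $y \mapsto \lambda y$ to the $L^2 \to L^2$ operator norm produces a factor $\lambda^{-n}$, so altogether $\|D^\alpha R_d(\lambda^2)\|_{2\to2} = \lambda^{-2+|\alpha|}\|D^\alpha R_{d\lambda}(1)\|_{2\to2}$, and it suffices to prove $\|D^\alpha R_{d'}(1)\|_{2\to2} \lesssim \langle d'\rangle$ uniformly in $d' \in (0,\infty)$ and $0 \le |\alpha| \le 1$.

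For this reduced bound I would split according to the representation \eqref{eq:Hsplit}: the kernel of $R_{d'}(1)$ is $(1-\eta_{d'}(|x-y|))$ times $\frac{e^{i|x-y|}}{|x-y|^{(n-1)/2}}a(|x-y|) + \frac{b(|x-y|)}{|x-y|^{n-2}}$. Because $a$ is supported in $r \ge \tfrac12$ and $b$ in $r \le \tfrac34$, both pieces are kernels supported in $|x-y| \lesssim \max(1, d')$ (the factor $1-\eta_{d'}$ cuts at $|x-y| \lesssim d'$, while $a,b$ restrict to $|x-y| \lesssim 1$, so the product is supported on $|x-y| \lesssim \langle d'\rangle$ in a uniform way). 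Differentiating once in $x$ costs at most one power of $|x-y|^{-1}$ on the $a$-term (by \eqref{eq:a}) and at worst a $|x-y|^{-1}|\log|x-y||$-type singularity on the $b$-term (by \eqref{eq:bodd}, \eqref{eq:beven}); in every dimension $n \ge 2$ the resulting kernel $K(x,y)$ obeys $|K(x,y)| \lesssim |x-y|^{-(n-1)}$ times a logarithm when $|x-y| \lesssim 1$, together with the localization $|x-y| \lesssim \langle d'\rangle$. Then $\sup_x \int |K(x,y)|\,dy \lesssim \int_{|z| \lesssim 1} |z|^{-(n-1)}(1+|\log|z||)\,dz + \int_{1 \lesssim |z| \lesssim \langle d'\rangle}|z|^{-(n-1)}\,dz \lesssim 1 + \log\langle d'\rangle \lesssim \langle d'\rangle$, and symmetrically for $\sup_y \int |K(x,y)|\,dx$; Schur's test then yields $\|D^\alpha R_{d'}(1)\|_{2\to2} \lesssim \langle d'\rangle$. (In dimension $n=2$ one uses \eqref{eq:2db} to see the $b$-part is essentially $\log$, with the same conclusion; for $|\alpha|=0$ in $n=2$ the $\log^2$ integrates to a constant on $|z|\lesssim 1$ as well.)

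The main obstacle — though a mild one — is bookkeeping the borderline logarithmic growth: the integral $\int_{|z| \sim r} |z|^{-(n-1)}\,dz \sim dr/r$ is scale-invariant, so both the small-scale piece ($r \lesssim 1$, handled by the mild integrable logarithm from $a$ and $b$) and the large-scale piece ($1 \lesssim r \lesssim d'$, contributing $\log d'$) must be controlled, and one has to verify that no worse-than-logarithmic growth in $d'$ sneaks in through the $|\alpha|=1$ derivative landing on $\eta_{d'}$ itself. That term, however, carries a factor $d'^{-1}\eta'(\cdot/d')$ supported on $|x-y| \sim d'$, against a free kernel of size $\lesssim (d')^{-(n-2)}$ (or $\log d'$ in $n=2$) there, on a shell of measure $\sim (d')^n$; the resulting Hilbert--Schmidt norm squared is $\lesssim (d')^{-2}(d')^{-2(n-2)}(d')^n = (d')^{4-2n+ \text{(minor log)}}$, which for $n \ge 2$ is $\lesssim 1$, well within the claimed $\langle d'\rangle$. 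Once scaling is undone this gives exactly \eqref{eq:R_d}.
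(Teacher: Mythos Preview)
Your scaling reduction to $\lambda=1$ is exactly right, and for $d'=d\lambda\lesssim 1$ your Schur-test argument on the $b$-part is fine. The gap is in the regime $d'\gg 1$, where the $a$-part of the kernel dominates. The Hankel asymptotics give only
\[
\Big|D^\alpha_x\Big(\frac{e^{i|x-y|}}{|x-y|^{(n-1)/2}}a(|x-y|)\Big)\Big|\ \sim\ |x-y|^{-(n-1)/2},
\]
not $|x-y|^{-(n-1)}$ as you wrote; the derivative landing on $e^{ir}$ produces no extra decay. Consequently Schur's test yields
\[
\sup_x\int_{\frac12\le|z|\lesssim d'}|z|^{-(n-1)/2}\,dz\ \sim\ (d')^{(n+1)/2},
\]
which is strictly worse than $\langle d'\rangle$ for every $n\ge 2$. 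An absolute-value bound cannot do better here: the free resolvent kernel genuinely has this slow decay, and the sharp estimate depends on the oscillation $e^{i|x-y|}$, which Schur's test throws away. (As a minor side remark, even $\int_{1\lesssim|z|\lesssim d'}|z|^{-(n-1)}\,dz$ is $\sim d'$, not $\log d'$, in $\R^n$; and Hilbert--Schmidt norms are infinite for nonzero convolution kernels on $\R^n$, so the bookkeeping for the $\eta_{d'}'$ term needs Young's inequality instead.)

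The paper avoids this by working on the Fourier side: after the same scaling, one shows that the convolution kernel $K_\rho=R_0(1)\chi_{[|x|<\rho]}$ has multiplier $\widehat{K_\rho}(\xi)$ bounded by $\langle\rho\rangle\langle\xi\rangle^{-1}$, which immediately gives $\|D^\alpha K_\rho\|_{2\to2}\lesssim\langle\rho\rangle$ for $|\alpha|\le 1$. For $\rho\le 1$ this comes from interpolating the $L^1$ norm of $K_\rho$ against the $L^1$ norm of $\Delta K_\rho$ (which is a point mass plus an integrable function since $(\Delta+1)R_0(1)=\delta_0$). For $\rho>1$ one writes $\widehat{K_\rho}$ as the convolution of $\widehat{R_0(1)}=\PV\frac{1}{|\eta|^2-1}+i\sigma_{S^{n-1}}$ with $\rho^n\hat\chi(\rho\,\cdot)$ and estimates directly, obtaining $\lesssim\rho\langle\xi\rangle^{-2}$. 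This Fourier-multiplier route is what captures the cancellation you are missing.
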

\begin{proof} By the scaling relation \eqref{eq:Rscale}, for any $\alpha$,
\[
\| D^\alpha R_0(\lambda^2) \chi_{[|x|<d]} \|_{2\to 2}= \lambda^{-2+|\alpha|} \|
D^\alpha R_0(1) \chi_{[|x|<\lambda d]} \|_{2\to2}
\]
where $\chi_{[|x|<\rho]}=\chi(|x|/\rho)$ is a smooth cut-off to the
set $|x|<\rho$ with $\rho>0$ arbitrary.  The notation is somewhat
ambiguous here; we are seeking an estimate for the convolution
operator with kernel $D^\alpha R_0(1)\chi_{[|x| < \lambda d]}$.  The
lemma is proved by showing that the Fourier transform of
$R_0(1)\chi_{[|x| < \rho]}$ is bounded point-wise by
$\japanese[\rho]\japanese[\xi]^{-1}$.

Consider first the case $\rho \le 1$.  The decomposition~\eqref{eq:Hsplit}
implies that $$\int_{\R^n} |R_0(1)(x)\chi(|x|/\rho)|\,dx \les \rho^2\log(\rho).$$
Furthermore, since $(\Delta+1)R_0(1)$ is a point mass at the origin,
the distribution $\Delta[R_0(1)\chi_{[|x| < \rho]}]$ consists of a point mass
plus a function of  $L^1$ norm $\lesssim |\log(\rho)|$.  This implies that the Fourier transform of $R_0(1)\chi_{[|x| < \rho]}$ is bounded     by  $|\log(\rho)| |\xi|^{-2}$.  The desired Fourier transform
estimate follows by interpolating these two bounds.   

When $\rho > 1$, it is more convenient to estimate
\[
\rho^n \Big| \int \big[\PV \frac{1}{|\eta|^2-1} +
i \sigma_{S^{n-1}}(d\eta)\big]\hat{\chi}((\xi-\eta)\rho)\, d\eta  \Big|.
\]
A standard calculation shows this to be less than
$\rho\japanese[\rho(|\xi|^2 - 1)]^{-1} < \rho\japanese[\xi]^{-2}$.
\end{proof}
 
\subsection{Proof of Lemma~\ref{lem:largem}}
Decomposing each free resolvent in the $M$-fold product
$(L_zR_0(z^2) )^M$ as in \eqref{eq:decomp} yields the identity
\begin{equation} \label{eq:decomp2}
(L_zR_0(z^2) )^M = \sum_{i_1 \ldots i_M} \prod_{k=1}^M
  \big(L_z R_{i_k}(z^2) \big).
\end{equation}
The indices $i_k$ may take numerical values corresponding to the
partition of unity $\{\Phi_i\}$, or else the letter $d$ to indicate
a short-range resolvent. There are two main types of products
represented here, namely:
\begin{itemize}
\item {\em Directed Products}, where the support of functions $\Phi_{i_k}$
and $\Phi_{i_{k+1}}$ are separated by less than $10\delta$ for each
$k$. A product is also considered to be directed if it has this
property once all instances of $i_k = d$ are removed.  The term
$(L_zR_d(z^2) )^M$ is a vacuous example of a directed product.

\item All other terms not meeting the above criteria are
 {\em Undirected Products}.  An undirected product must contain two
adjacent numerical indices (i.e., after discarding all instances
where $i_k = d$) for which the corresponding functions $\Phi_i$ have
disjoint support with distance at least $10\delta$ between them.
\end{itemize}

\begin{lemma} \label{lem:combinatorics}
For any $\delta>0$, there exists a partition of unity $\{\Phi_i\}$
with approximately $\delta^{1-n}$ elements, having ${\rm diam}\,{\rm
supp}\,(\Phi_i) < \delta$ for each $i$ and admitting no more than
$\delta^{1-n}(C_n)^M$ directed products of length $M$
in~\eqref{eq:decomp2}.
\end{lemma}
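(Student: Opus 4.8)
The plan is to obtain $\{\Phi_i\}$ from a standard $\delta$-net on $S^{n-1}$ and then reduce the count to a purely combinatorial estimate once the relevant geometry is recorded. First I would construct the partition of unity: fix a maximal $\frac{\delta}{10}$-separated subset $\{\omega_i\}\subset S^{n-1}$, so that the geodesic caps $B(\omega_i,\frac{\delta}{10})$ already cover $S^{n-1}$. A volume comparison on $S^{n-1}$ shows simultaneously that the number of these caps is $\approx\delta^{1-n}$ and that the doubled caps $B(\omega_i,\frac{\delta}{5})$ have bounded overlap, i.e.\ no point of $S^{n-1}$ lies in more than $K_n$ of them. Choosing a smooth partition of unity $\{\Phi_i\}$ subordinate to $\{B(\omega_i,\frac{\delta}{5})\}$ then yields ${\rm diam}\,{\rm supp}\,(\Phi_i)<\delta$ and the stated number of elements.

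Next I would record the one geometric fact that drives the combinatorics: for each index $i$, the neighbour set
\[
\mathcal N(i):=\{\, j\ :\ \dist(\supp\Phi_i,\supp\Phi_j)<10\delta\,\}
\]
has cardinality bounded by a constant $C_n$ depending only on $n$, uniformly in $i$ and in $\delta\in(0,1)$. Indeed every such $\supp\Phi_j$ lies inside the cap $B(\omega_i,C\delta)$, while the centres $\omega_j$ are $\frac{\delta}{10}$-separated; packing $\frac{\delta}{20}$-balls around those centres into $B(\omega_i,2C\delta)$ and comparing volumes on the $(n-1)$-sphere bounds their number by $C_n$. (This is just the bounded-overlap property quantified at the scale $10\delta$; for $n=2$ it is transparent, since $S^1$ carries $\sim\delta^{-1}$ caps and only $O(1)$ of them lie within arclength $10\delta$ of a given one.)

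The count then proceeds as follows. Given a directed product $(i_1,\dots,i_M)$ from \eqref{eq:decomp2}, let $S=\{k:i_k\neq d\}=\{j_1<\dots<j_r\}$ be the set of positions carrying a numerical index. By the definition of a directed product, the numerical entries form a chain with $i_{j_{\ell+1}}\in\mathcal N(i_{j_\ell})$ for $1\le\ell<r$. Hence, for a fixed set $S$, there are at most $\approx\delta^{1-n}$ choices for $i_{j_1}$ and at most $C_n$ choices for each subsequent $i_{j_{\ell+1}}$, giving at most $\delta^{1-n}C_n^{\,r}$ directed chains (the case $r=0$, the vacuous product $(d,\dots,d)$, contributes $1\le\delta^{1-n}$). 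Summing over the $2^M$ possible sets $S\subseteq\{1,\dots,M\}$,
\[
\#\{\text{directed products of length }M\}\ \les\ \delta^{1-n}\sum_{r=0}^{M}\binom{M}{r}C_n^{\,r}=\delta^{1-n}(1+C_n)^M,
\]
which is the claimed bound after relabelling $1+C_n$ as $C_n$.

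There is no genuine obstacle in this lemma; the two points that require care are the uniform-in-$\delta$ estimate $\#\mathcal N(i)\le C_n$ --- a packing comparison on $S^{n-1}$ whose constant must be independent of the truncation scale --- and the bookkeeping needed to accommodate the interspersed short-range labels $d$, which is precisely what produces the harmless extra factor $2^M$ absorbed into the constant.
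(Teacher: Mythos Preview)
Your proof is correct and follows essentially the same approach as the paper's: construct the partition of unity from a $\delta$-net, bound each index's neighbor set by a dimensional constant via packing, and count directed products as $\delta^{1-n}$ choices for the first numerical index times $C_n$ for each subsequent one. The paper's proof is just a two-sentence sketch of this; your treatment of the interspersed short-range labels $d$ (summing over subsets $S$ to produce the $(1+C_n)^M$ factor) is the careful version of what the paper leaves implicit.
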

\begin{proof}
The first claim is a standard fact from differential geometry. For
the second claim note that there are $\les \delta^{1-n}$ choices for
the first element in a directed product, but only $C_n$ choices at
each subsequent step.
\end{proof}

The iterated resolvent $(L_zR_0(z^2))^M$ is an oscillatory integral
operator with phase $e^{iz\sum_{k=1}^M |x_{k} - x_{k-1}|}$,
where $x_0 = y$ and $x_M = x$.  Loosely speaking, there is a 
region of stationary phase where $\sum_{k=1}^M |x_k - x_{k-1}|
\approx |x-y|$.  The integral kernel of a directed product is supported here, hence
one cannot gain any benefit from oscillation as $z \to \infty$ beyond the bounds
for individual resolvents in Proposition~\ref{prop:dir_res} and
Lemma~\ref{lem:R_d}.  Those bounds do not decrease to zero
in the limit of large $z$.  It appears that the operator norm of a
directed product does not decrease to zero either.

Never the less, one can show that long directed products 
have an operator norm that is small enough for our purposes.
The geometric idea is relatively simple:
If $\delta < \frac{1}{20M}$, then all the angular cutoffs
$\Phi_{i_k}$ have support within a single hemisphere. The
convolution operators $R_{i_k}(z^2)$ are therefore biased
consistently to one side.  This introduces a gain from the
product $\prod_{k=1}^M V(x_k)$, as only a handful of $x_k$
can be located near the origin, and $V(x_k)$ is small everywhere else.
%In the one-dimensional setting this is
%reminiscent of a product of Volterra operators, where a norm
%improvement of $M!$ is typical.

The following lemma is adapted from Lemma~4.8
of~\cite{EGS2}.  There is a small but significant difference in the structure of
the perturbation.  Here $L_z =  V(D_m + \lambda)$ has the property that
$L_zR_0(z^2)$ is a bounded operator on the space $B$.  The symmetrized
version $(L_z + L_z^*)R_0(z^2)$ does not map $B$ to itself unless $V(x)$ is assumed to be differentiable.
This explains the use of more elaborate function spaces in~\cite{EGS2}
and the need for bounds on the second derivative of the truncated free resolvent
(which ultimately restricts the dimension to $n \geq 3)$.

Now we may state the bound for directed products involving $L_z$
in dimensions $n \geq 2$.

\begin{lemma} \label{lem:directedprod} 
Given any $r > 0$, there exists a distance $d = d(r) > 0$ such that
each directed product in~\eqref{eq:decomp2} satisfies the estimate
\begin{equation}
\bignorm[\prod_{k=1}^M \big(L_z R_{i_k}(z^2) \big)f][B]
\le
 C_{n,V,r}\, r^{M} \norm[f][B]
\end{equation}
uniformly over all $z> d^{-1}$ and all choices of $M$ and $\delta$
satisfying $\delta \le \frac{1}{20M}$.

Consequently, given any $c>0$, there exists a number $M= m(c,V)$ and a
partition of unity governed by $\delta = \frac{1}{20M}$ so that
the sum over all directed products achieves the bound
\begin{equation*} \label{eq:directedprod}
\sum_{\substack{i_1 \ldots i_M \\ directed}} \bignorm[\prod_{k=1}^M
\big(L_zR_{i_k}(z^2))][B \to B] \le
{\textstyle \frac{c}{2}}
\end{equation*}
uniformly in $z > d^{-1}$.
\end{lemma}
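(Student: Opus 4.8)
The plan is to transcribe the proof of Lemma~4.8 of \cite{EGS2} to the Dirac setting. The two structural changes are that the perturbation is $L_z=V(D_m+\lambda)$ rather than a symmetrized operator, and that only the first-order resolvent bounds of Proposition~\ref{prop:dir_res} are available (and all we use). Fix $r>0$. A short-range factor $R_{i_k}=R_d(z^2)$ is harmless: $L_zR_d(z^2)=V(D_m+\lambda)R_d(z^2)$, and Lemma~\ref{lem:R_d} together with $\lambda=\sqrt{z^2+m^2}\les z$ gives $\|(D_m+\lambda)R_d(z^2)\|_{2\to2}\les d+z^{-1}\les1$ for $d\le1$ and $z\ge1$; since the kernel of $R_d(z^2)$ is supported in $|x-y|\les d\le1$, this localizes to a bound on $B$, so that $\|L_zR_d(z^2)\|_{B\to B}\les1$ uniformly (constant depending on $V$). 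Thus each index $i_k=d$ costs only a fixed multiplicative constant, which we absorb into $C_{n,V,r}$, and it remains to estimate a product in which every $R_{i_k}$ is a genuine directional piece $R_{d,\delta}(z^2)$ whose consecutive angular cut-offs are supported within $10\delta$ of one another. Since there are $M$ of them and $\delta\le\frac1{20M}$, all the $\Phi_{i_k}$ are then supported within angular distance $\le\frac12$ of one fixed direction, which after a rotation we take to be the north pole $e_n$.

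The geometry driving the estimate is the following. On the support of the kernel of $\prod_{k=1}^M\big(L_zR_{i_k}(z^2)\big)$ the intermediate points $x_0=y,x_1,\dots,x_M=x$ obey $x_k-x_{k-1}\in\{\,|v|>d,\ v/|v|\in\supp\Phi_{i_k}\,\}$, hence $\la x_k-x_{k-1},e_n\ra\ge c_0|x_k-x_{k-1}|>c_0d$ for a dimensional constant $c_0>0$. Therefore the scalars $t_k:=\la x_k,e_n\ra$ are strictly increasing with $t_k-t_{k-1}>c_0d$, and since $|x_k|\ge|t_k|$, for every $R>0$ at most $\les R/(c_0d)$ of the $x_k$ can lie in a ball of radius $R$. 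In particular, once $d\ge1$ at most $O(1)$ of the $x_k$ have $|x_k|\le c_0d$, so all but $O(1)$ of the factors $V(x_k)$ satisfy $|V(x_k)|\les\la x_k\ra^{-1-}\les(c_0d)^{-1-}$; the gain in $V$ thus compounds along the chain.

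The main estimate is assembled by composing the $M$ factors, reusing the weighted-space bookkeeping of \cite{EGS2}. Proposition~\ref{prop:dir_res} gives $\|R_{d,\delta}(z^2)\|_{B\to B^*}\les z^{-1}$ and $\|\nabla R_{d,\delta}(z^2)\|_{B\to B^*}\les1$, uniformly in $d,\delta$; this absorbs the operator $(D_m+\lambda)$ in $L_z$, the factor $\lambda\les z$ it contributes being cancelled by the $z^{-1}$. Multiplication by $V$ maps $B^*\to B$ with the quantitative localization $\|Vg\|_B\les\big(\sum_{2^j\ges A}2^j\|V\|_{L^\infty(D_j)}\big)\|g\|_{B^*}\les A^{0-}\|g\|_{B^*}$ whenever $g$ is supported in $\{|x|\ge A\}$ (compare \eqref{eq:BstarB}). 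Inserting a partition of unity in the $e_n$-variable between consecutive factors, the surviving terms are those along which the slab index increases monotonically; by the previous paragraph all but $O(1)$ of the $M$ applications of $V$ then act on functions supported at distance $\ges c_0d$ from the origin and so contribute a factor $\les (c_0d)^{0-}$. Telescoping from $y$ to $x$ yields
\[
\bignorm[\prod_{k=1}^M\big(L_zR_{i_k}(z^2)\big)f][B]\les C_{n,V,r}\,r^{M}\,\norm[f][B]
\]
uniformly in $z>d^{-1}$ and in all admissible $M,\delta$, once $d=d(r)$ is taken large enough that the per-step factor is below $r$. The second assertion is then immediate: by Lemma~\ref{lem:combinatorics} there are at most $\delta^{1-n}C_n^M=(20M)^{n-1}C_n^M$ directed products of length $M$, so
\[
\sum_{\substack{i_1 \ldots i_M \\ \text{directed}}}\bignorm[\prod_{k=1}^M\big(L_zR_{i_k}(z^2)\big)][B\to B]\les (20M)^{n-1}\,(C_nr)^M\,C_{n,V,r};
\]
fixing $r$ (depending only on $n$) so small that $C_nr<\frac12$ makes the right-hand side tend to $0$ as $M\to\infty$, hence for $M=m(c,V)$ sufficiently large, with the matching partition governed by $\delta=\frac1{20M}$, it is $\le\frac c2$, uniformly in $z>d(r)^{-1}$.

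The hard part is the telescoping step. The operators $(D_m+\lambda)R_{d,\delta}(z^2)\colon B\to B^*$ are only $O(1)$ and in no way contractive, so the entire geometric gain must be extracted from how the weights $V(x_k)$ interleave with them along a monotone chain, and this has to be carried out for an arbitrary $f\in B$ rather than localized data; choosing the auxiliary spaces and organizing this bookkeeping is exactly the technical content we import from \cite{EGS2}. The one genuine simplification in the Dirac case is that $L_zR_{d,\delta}(z^2)$ already maps $B$ into itself, so---unlike for the symmetrized operator $(L_z+L_z^*)R_0(z^2)$ of \cite{EGS2}---no enlarged function spaces and no bounds on second-order derivatives of the truncated resolvent are needed, which is precisely what allows the argument to run in dimension $n=2$.
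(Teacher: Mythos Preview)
There is a genuine internal inconsistency in your choice of the parameter $d$. In the first paragraph you bound the short-range factor by
\[
\|(D_m+\lambda)R_d(z^2)\|_{2\to2}\les d+z^{-1}\les 1\qquad\text{``for }d\le1\text{ and }z\ge1\text{''},
\]
and then conclude that each index $i_k=d$ costs only a fixed constant. Two paragraphs later you take ``$d=d(r)$ large enough that the per-step factor is below $r$'', since your gain from $V$ is $|V(x_k)|\les(c_0d)^{-1-}$ and this is only small when $d$ is large. These two requirements are incompatible. With $d$ large and $z>d^{-1}$ one has $\la dz\ra\sim dz$, and Lemma~\ref{lem:R_d} then gives $\|(D_m+\lambda)R_d(z^2)\|_{2\to2}\sim d$, so each short-range factor $L_zR_d(z^2)$ has $B\to B$ norm of order $C_Vd\gg1$, not $O(1)$. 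In addition, even an $O(1)$ bound per short-range factor would not be absorbable into $C_{n,V,r}$: a directed product can contain up to $M$ indices $i_k=d$ (indeed $(L_zR_d)^M$ is itself directed), so the accumulated cost is $O(1)^M$, which competes directly with the $r^M$ you are trying to prove. You cannot simply ``pull out'' the short-range factors and reduce to an all-directional product.

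The paper resolves both issues by going in the opposite direction: $d$ is taken \emph{small}, namely $d=(r/(C_nC_V))^8$. The gain is not extracted from $V$ being small at scale $d$; instead one fixes a threshold $A=A(r)$ (independent of $d$) at which $\|V\chi_{\{|x|>A\}}\|_{B^*\to B}\le r^2/(C_n^2C_V)$, inserts a single half-space cutoff $\chi(x_n)+\chi(-x_n)$, and tracks how the support of the running function translates upward by $\tfrac23 d$ at each directional step. After $\sim A/d$ ``warm-up'' steps (absorbed into $C_{n,V,r}$) every subsequent application of $V$ gains the small factor. A short-range factor is then \emph{beneficial}: its $B\to B$ norm carries a factor $d\ll1$ from Lemma~\ref{lem:R_d}, and this is chosen to exactly cancel the $(r^{-1}C_nC_V)^8$ penalty incurred by setting the support back a bounded number of steps. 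The $\chi(-x_n)$ half is handled by duality after splitting the product in the middle. Your sketch does not reproduce this balance, and the ``telescoping'' you invoke from \cite{EGS2} is precisely this $d$-small argument, so it cannot be imported under your $d$-large hypothesis.
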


\begin{proof}
In this proof, we will keep track of the superscripts
$\pm$ on the resolvents. Also, we will write $\|V\|_{B^*\to B}=C_V$.
There is no loss of generality if we assume that $r < C_nC_V$.

After a rotation, we may assume that every function $\Phi_{i_k}$
which appears in the product has support within a half-radian
neighborhood of the north pole, where $x_n > \frac23$. If $f \in B$
is supported on the half space $\{x_n
> A\}$, then the support of $R_{i_k}^+(\lambda^2)f$ must be
translated upward to $\{x_n > A + \frac23d\}$.
The short-range resolvent $R_d^+(\lambda^2)$ does
not have a preferred direction; however if $f \in B$ is
supported on $\{x_n > A\}$ then ${\rm supp}\,R_d^+(\lambda^2)f
\subset \{x_n > A - 2d\}$.

The purpose of keeping track of supports is that if $f \in
B^* $ is supported away from the origin, in the set $\{|x| >
A\}$, then the estimate in \eqref{eq:BstarB} can be improved to
$$
\norm[Vf][B] \les  \norm[\la x \ra^{1+}V\chi_{[|x| > A]}][L^\infty]
\norm[f][B^*]\les \la A\ra^{0-} \norm[f][B^*].
$$
Note that we can choose $A=A(n,V,r)$ so that
\begin{equation} \label{eq:distantL}
\norm[Vf][B]\leq \frac{r^2}{C_n^2C_V}\|f\|_{B^*},
\end{equation}
provided that $f$ is supported  in the set $\{|x| >
A\}$.

Let $\chi$ be a smooth function supported on the interval $[-1,
\infty)$ such that $\chi(x_n) + \chi(-x_n) = 1$.  We will initially
estimate the operator norm of $\big(\prod_k (L_zR_{i_k}^+(z^2))
\big) \chi(x_n)$. Multiplication by $\chi(x_n)$ is bounded operator
of approximately unit norm on $B^*$.

The support of $\chi(x_n) f$ lies in the half-space $\{x_n > -1\}$.
Suppose every one of the indices $i_k$ is numerical.  Then each
application of an operator $L_zR_{i_k}^+(z^2)$ translates the
support upward by $\frac23d$. For the first $\frac{3A}{2d}$ steps the
operator norm of $L_zR_{i_k}^+(z^2)$ is bounded by $C_nC_V$.   Thereafter it is possible to use the stronger bound
of~\eqref{eq:distantL}  because the support will
have moved into the half-space $\{x_n > A\}$.  The combined estimate is
\begin{equation} \label{eq:alldirected}
\begin{aligned}
\bignorm[\prod_{k=1}^m (L_zR_{i_k}^+(z^2))\,\chi(x_n)f][B]
&\le (C_nC_V)^{m} \Big(\frac{r^2}{(C_nC_V)^2}\Big)^{m-\frac{3a}{2d}}
\norm[f][B] \\
& = (C_n C_V)^{-m}(r^{-1}C_n C_V)^{\frac{3a}{d}} r^{2m} \norm[f][B].
\end{aligned}
\end{equation}
This is also valid for small $m$ by our assumption that $r < C_nC_V$.

If each directed resolvent $R_{\Phi_i}^+(\lambda^2)$ is seen as
taking one step forward, then the short-range resolvent
$R_d^+(\lambda^2)$ may take as many as three steps back.  Suppose a
directed product includes exactly one index $i_k = d$.  This will
have the most pronounced effect if it occurs near the beginning of
the product, delaying the upward progression of supports by a total
of $4$ steps.  In this case one combines
~\eqref{eq:distantL}, and Lemma~\ref{lem:R_d}
to obtain
\[
\bignorm[\prod_{k=1}^m (L_zR_{i_k}^+(z^2))\,\chi(x_n)f][B]
\le (C_nC_V)^{m} d \Big(\frac{r^2}{(C_nC_V)^2}\Big)^{m-(\frac{3A}{2d}+4)}
\norm[f][B].
\]
Notice that this estimate agrees with the one in
\eqref{eq:alldirected} up to
a factor of $d(r^{-1}C_nC_V)^8$.
By setting $d = d(r) = \big(\frac{r}{C_nC_V}\big)^8$,
the bound in \eqref{eq:alldirected} is
strictly larger.  Similar arguments yield the same result for any
directed product with one or more instances of the short-range
resolvent $R_d^+(\lambda^2)$.

To remove the spatial cutoff, write
\begin{equation*}
\prod_{k=1}^m L_zR_{i_k}^+(z^2) \ = \ \Big(\prod_{k=1}^{m/2}
L_zR_{i_k}^+(z^2)\Big)(\chi(x_n) + \chi(-x_n))
\Big(\prod_{k=\frac{m}2+1}^m L_zR_{i_k}^+(z^2)\Big).
\end{equation*}
Consider the $\chi(x_n)$ term.  By~\eqref{eq:alldirected}, the first half of
the product carries an operator norm bound of $(C_n\, C_V)^{-\frac{m}{2}}
(r^{-1}C_nC_V)^{\frac{3A}{d(r)}}r^m$. The second half
contributes at most $(C_n C_V)^{m/2}$. Put together, this product has an operator norm less than
$C_{n,V,r}\,r^m$, where $C_{n,V,r} = (r^{-1}C_nC_V)^{\frac{3A}{d(r)}}$.

The $\chi(-x_n)$ term has nearly identical
estimates, by duality. The adjoint of any directed resolvent
$R_{\Phi}^+(z^2)$ is precisely
$R_{\tilde{\Phi}}^-(z^2)$, with $\tilde{\Phi}$ being the
antipodal image of $\Phi$.  Because the order of multiplication is
reversed, one applies the geometric argument above to the adjoint operators $ R_{i_k}^-(z^2) L_z^*$ (modulo the
antipodal map).

According to Lemma~\ref{lem:combinatorics} there are at most $\delta^{1-n}
(C_n)^m$ directed products of length $m$.
To prove \eqref{eq:directedprod}, it therefore suffices to let
$r = \frac{1}{2C_n}$, and $\delta = \frac{1}{20m}$ so that the sum of the
operator norms of all
directed products is bounded by $20^{n-1} C_{n,V} m^{n-1}2^{-m}$.
This can be made smaller than $\frac{c}{2}$ by choosing $m$ sufficiently
large.
\end{proof}

As for the undirected products, recall that their defining feature
is the presence of adjacent resolvents $R_{i}^+(\lambda^2)$ oriented
in distinct directions.  The resulting oscillatory integral has no
region of stationary phase, and therefore exhibits improved bounds
at high energy provided the potential $V(x)$ is smooth. The following lemma follows from Lemma 4.9 in \cite{EGS2} by minor changes in the proof.

\begin{lemma} \label{lem:nonstatphase}
Let $\Phi_1$ and $\Phi_2$ be chosen from a partition of unity of
$S^{n-1}$ so that their supports are separated by a distance greater
than $10\delta$. Suppose $V \in C^\infty(\R^n)$ with compact
support. Then for each $j \ge 0$, and any $N\ge1$,
\begin{equation}\label{eq:Phi1Phi2}
\big\| L_zR_{d,\Phi_2}^+(z^2) (L_z R_d^+(z^2))^j L_z
R_{d,\Phi_1}^+(z^2) \big\|_{B\to B} =
\O(z^{-N})
\end{equation}
as $z\to\infty$ and similarly for $R^-(z^2)$.
\end{lemma}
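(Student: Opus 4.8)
The plan is to prove the non-stationary phase bound \eqref{eq:Phi1Phi2} by viewing the whole operator $L_zR_{d,\Phi_2}^+(z^2)(L_zR_d^+(z^2))^jL_zR_{d,\Phi_1}^+(z^2)$ as an oscillatory integral operator whose total phase is $z\big(|x_{j+2}-x_{j+1}|+\sum_{k=1}^{j+1}|x_k-x_{k-1}|\big)$ with $x_0,x_1,\dots,x_{j+2}$ ranging over $\R^n$, and to extract decay in $z$ from the fact that the two end resolvents are ``directed'' in transverse directions. The key geometric observation, exactly as in Lemma~4.9 of \cite{EGS2}, is that the support conditions on $\Phi_1$ and $\Phi_2$ (separation $>10\delta$ on $S^{n-1}$) force $\nabla_{x_1}$ of the phase coming from the $\Phi_1$-resolvent and $\nabla_{x_{j+1}}$ of the phase from the $\Phi_2$-resolvent to point into genuinely different cones, so that when these two segments are joined through the short-range factors $R_d^+(z^2)$ (which carry no oscillation and no directional constraint), the combined phase has no critical point: its gradient in some internal variable is bounded below uniformly. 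This is where the compact support and smoothness of $V$ enter — they guarantee that the amplitude, together with all of its derivatives, is a nice compactly supported symbol, so one may integrate by parts in that internal variable arbitrarily many times.

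Concretely, the steps I would carry out are the following. First, reduce to $z=1$ is \emph{not} available here (decay in $z$ is the whole point), so instead rescale by $x_k\mapsto z^{-1}x_k$ throughout, using \eqref{eq:Rscale} and \eqref{eq:calRscale}, \eqref{eq:dercalRscale}, and Lemma~\ref{lem:R_d}; this concentrates all the $z$-dependence into the phase $e^{i\sum|x_k-x_{k-1}|}$ and into the truncation radii $d\mapsto dz$, while the amplitudes $a(|x_k-x_{k-1}|)$, $b$, $\eta_{dz}a$ and the rescaled potentials $z^{-1-}V(\cdot/z)$ remain uniformly controlled symbols. Second, isolate the oscillatory parts: write each directed end-resolvent $R_{d,\Phi_i}^+$ via \eqref{eq:Hsplit} as its oscillatory piece $\frac{e^{i|x-y|}}{|x-y|^{(n-1)/2}}a\,\eta_d\Phi_i$ plus the non-oscillatory, compactly supported near-diagonal piece $\frac{b}{|x-y|^{n-2}}\eta_d\Phi_i$; the latter has $b$ supported in $r<3/4$, so after rescaling it contributes a genuinely short-range, bounded factor with no phase and can be absorbed into the $R_d^+$ chain, reducing to the case where both ends are purely oscillatory with transverse directions. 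Third, in that reduced case, write everything as a single integral over $x_1$ (the point joining the $\Phi_1$-segment to the first short-range resolvent) — or, if $j=0$, over the joining point of the two directed segments — and observe that along the support of the amplitude the $x_1$-gradient of $|x_1-x_0|$ lies in the $\Phi_1$-cone while the combined $x_1$-gradient of the rest lies (after the harmless short-range detours, whose kernels are Schwartz-tails in the rescaled variables by Lemma~\ref{lem:R_d}) within an $O(\delta)$-neighborhood of the $\Phi_2$-cone; transversality gives $|\nabla_{x_1}(\text{phase})|\gtrsim 1$ uniformly on the support. Fourth, integrate by parts $N$ times in $x_1$ using the vector field $\frac{\nabla_{x_1}\Phi}{i|\nabla_{x_1}\Phi|^2}\cdot\nabla_{x_1}$; each integration by parts produces a factor $z^{-1}$ from the $e^{iz(\cdots)}$ (after undoing the rescaling, or equivalently each step divides by $z$ before rescaling), while the derivatives landing on the amplitude are harmless because $V\in C_c^\infty$ and $a,b$ satisfy the symbol bounds \eqref{eq:a}, \eqref{eq:bodd}, \eqref{eq:beven}. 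Fifth, estimate the resulting non-oscillatory operator in the $B\to B$ norm by the weighted Schur test together with the scaling relations of Section~\ref{sec:free}, exactly as in the proof of Proposition~\ref{prop:dir_res} and Lemma~\ref{lem:R_d}; collecting the $z^{-N}$ gain yields \eqref{eq:Phi1Phi2}, and the $R^-(z^2)$ case follows by complex conjugation.

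The main obstacle — and the reason this is stated as following from \cite{EGS2} ``by minor changes'' rather than proved in detail — is bookkeeping the transversality through the chain of short-range resolvents $R_d^+(z^2)$ while staying inside the function space $B$. The $R_d^+$ factors are not directed, so a priori they could redirect the ``momentum'' and destroy the lower bound on the phase gradient; the point is that after rescaling their kernels decay like Schwartz functions of the rescaled separation $z|x_k-x_{k-1}|$, so they effectively pin consecutive points within $O(z^{-1})$ of each other and therefore cannot move the direction of the net phase gradient by more than $o(1)$ as $z\to\infty$. Making this quantitative — i.e. showing $|\nabla_{x_1}(\text{phase})|\gtrsim 1$ survives on the support for $z$ large, uniformly in $j$ and in the number of short-range insertions — is the technical heart; once it is in place, the integration-by-parts and the $B\to B$ Schur-test estimates are routine and parallel the $n\geq 3$ argument of \cite{EGS2}, with the only dimensional adjustment being the slightly narrower admissible range of exponents $p$ in Lemma~\ref{lem:VCP}, which is harmless here since the end-resolvents contribute $p=\frac{n-1}{2}$.
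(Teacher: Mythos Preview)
Your overall strategy---view the product as an oscillatory integral with phase $z\sum_k|x_k-x_{k-1}|$, exploit the transversality of $\Phi_1$ and $\Phi_2$ to rule out stationary points, integrate by parts repeatedly, and use $V\in C_c^\infty$ to control the amplitude---is exactly the one the paper invokes by citing Lemma~4.9 of \cite{EGS2}. The paper gives no independent proof here, so there is nothing further to compare against. However, your handling of the intermediate short-range resolvents $R_d^+(z^2)$ contains a real gap.

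You claim these factors ``carry no oscillation'' and that ``after rescaling their kernels decay like Schwartz functions of the rescaled separation $z|x_k-x_{k-1}|$, so they effectively pin consecutive points within $O(z^{-1})$ of each other.'' This is not correct. The kernel of $R_d^+(z^2)$ is supported on $|x-y|\lesssim d$, and by~\eqref{eq:Hsplit} it splits into a non-oscillatory $b$-piece (supported on $z|x-y|<\tfrac34$, so indeed localized at scale $z^{-1}$) \emph{and} an oscillatory $a$-piece $z^{(n-1)/2}e^{iz|x-y|}|x-y|^{-(n-1)/2}a(z|x-y|)(1-\eta_d)$, supported on $\tfrac{1}{2z}\lesssim|x-y|\lesssim d$. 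The $a$-piece is neither Schwartz in the rescaled variable nor localized at scale $z^{-1}$; it contributes a genuine phase $z|x_k-x_{k-1}|$ with an unconstrained direction $\hat u_k$. Lemma~\ref{lem:R_d} is only an $L^2$ operator bound and does not give the kernel localization you claim. Consequently your assertion that $\nabla_{x_1}(\text{phase})$ stays bounded below fails: if the second segment contributes its oscillatory part then $\nabla_{x_1}\psi=\hat u_0-\hat u_1$ with $\hat u_1$ free, and this can vanish on the support of the amplitude.

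The repair, which is what is actually done in \cite{EGS2}, is to split each $R_d^+$ into its $a$- and $b$-parts. For any resulting combination one includes all $a$-part phases in the total phase $\psi$ and observes the telescoping identity
\[
\sum_{k=1}^{j+1}\nabla_{x_k}\psi=\hat u_0-\hat u_{j+1},
\]
valid regardless of which intermediate segments are oscillatory. Since $\hat u_0\in\supp\Phi_1$ and $\hat u_{j+1}\in\supp\Phi_2$ are separated by $10\delta$, this forces $|\nabla\psi|\gtrsim\delta(j+1)^{-1/2}$ on the support, and one integrates by parts using the \emph{full} vector field $(iz|\nabla\psi|^2)^{-1}\nabla\psi\cdot\nabla$ in all intermediate variables simultaneously, not in $x_1$ alone. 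The $b$-parts, being genuinely localized at scale $z^{-1}$, can then be handled as you indicate. With this correction the remaining steps---symbol bounds on the differentiated amplitude, and the final $B\to B$ estimate---go through as you sketch.
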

 
Note that under the conditions of Lemma~\ref{lem:nonstatphase}
 each undirected product in~\eqref{eq:decomp2} satisfies the bound
\begin{equation} \label{eq:nonstatphase}
\bignorm[\prod_{k=1}^M \big(L_z R_{i_k}(z^2)\big)][B \to B]
 = \O(z^{-N})
\end{equation}
for any $N\ge1$.
We now show by approximation that vanishing still holds for merely
continuous $V$, but without any control over the rate.

\begin{lemma} \label{lem:RiemannLebesgue}
Let $\Phi_1$ and $\Phi_2$ be chosen as in
Lemma~\ref{lem:nonstatphase}. Suppose $V$ is a continuous function
with $V\in Y$.  Then each undirected product in~\eqref{eq:decomp2}
satisfies the limiting bound
\begin{equation} \label{eq:RiemannLebesgue}
\lim_{z \to \infty} \bignorm[ \prod_{k=1}^M
\big(L_z R_{i_k}(z^2)\big)][B\to B] = \ 0.
\end{equation}
\end{lemma}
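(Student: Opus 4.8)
The plan is to deduce this from the smooth compactly supported case \eqref{eq:nonstatphase} by a density argument. The one input beyond \eqref{eq:nonstatphase} is that each factor $L_z R_{i_k}(z^2)$, whether $i_k$ is numerical or $i_k = d$, is bounded on $B$ uniformly in $z > z_0$, with operator norm controlled by the weighted quantity $\sum_{j\ge0} 2^j \|V\|_{L^\infty(D_j)}$ appearing in \eqref{eq:BstarB}; these are precisely the bounds already assembled in the proofs of Proposition~\ref{prop:dir_res}, Lemma~\ref{lem:R_d}, and Lemma~\ref{lem:directedprod}, once one notes that $\lambda = \sqrt{z^2+m^2} \sim z$ so the factor $(D_m+\lambda)$ inside $L_z$ costs only one power of $z$ on a truncated or short-range resolvent. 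In particular the $B\to B$ norm of the full $M$-fold product depends on the potential only through $\sum_j 2^j\|V\|_{L^\infty(D_j)}$.

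Given $\eta>0$, I would choose $V_\eta\in C_c^\infty(\R^n)$ with $\sum_{j\ge0}2^j\|V-V_\eta\|_{L^\infty(D_j)}<\eta$. This is possible because $V$ is continuous and, by the hypothesis $V\in Y$ (which in particular follows from $|V_{ij}(x)|\les\la x\ra^{-1-}$), this series converges, so its tail over $j>J$ is small once $J$ is large; truncating $V$ to $\{|x|<2^J\}$ and uniformly approximating the truncation by a $C_c^\infty$ function then does the job. Setting $L_z^\eta=V_\eta(D_m+\lambda)$, expand
\[
\prod_{k=1}^M\big(L_zR_{i_k}(z^2)\big)-\prod_{k=1}^M\big(L_z^\eta R_{i_k}(z^2)\big)
\]
as a telescoping sum of $M$ terms, each containing exactly one factor built from $V-V_\eta$ and $M-1$ factors built from $V$ or from $V_\eta$. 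By the uniform $B\to B$ bounds of the previous paragraph, the single factor built from $V-V_\eta$ contributes $\les\eta$ while the remaining $M-1$ factors contribute $\les(C_{n,V}+\eta)^{M-1}$, so the whole difference has $B\to B$ norm $\les_{M,V}\eta$, uniformly in $z>z_0$.

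Since $V_\eta\in C_c^\infty(\R^n)$, the undirected product $\prod_{k=1}^M\big(L_z^\eta R_{i_k}(z^2)\big)$ obeys \eqref{eq:nonstatphase}, hence its $B\to B$ norm is $\O_\eta(z^{-N})$ and tends to $0$ as $z\to\infty$. Combining the two estimates,
\[
\limsup_{z\to\infty}\bignorm[\prod_{k=1}^M\big(L_zR_{i_k}(z^2)\big)][B\to B]\les_{M,V}\eta,
\]
and since $\eta>0$ was arbitrary the limit is $0$. The one place to proceed with care is the combination of the density step with the assertion that the uniform $B\to B$ bounds depend on $V$ only through the multiplier norm $\sum_j2^j\|V\|_{L^\infty(D_j)}$: this is exactly what makes the telescoping estimate legitimate and lets the approximation error be absorbed. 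Everything else is routine once Lemma~\ref{lem:nonstatphase} and the estimates collected earlier in this section are in hand.
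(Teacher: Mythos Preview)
Your argument is correct and follows essentially the same route as the paper: approximate $V$ by a compactly supported smooth $V_\eta$ in the $B^*\to B$ multiplier norm, bound the difference of the two $M$-fold products by a telescoping sum controlled by $\eta$ times a uniform constant, and then invoke Lemma~\ref{lem:nonstatphase} (via~\eqref{eq:nonstatphase}) for the smooth approximation before letting $\eta\to0$. Your explicit justification of the density step and of the uniform $B\to B$ bound for each factor (including the short-range case $i_k=d$) is slightly more detailed than the paper's version, but the substance is the same.
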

\begin{proof}
For any small $\gamma > 0$, there exists a smooth approximation
$V_\gamma \in C^\infty(\R^n)$ of compact support so that $\norm[V -
V_\gamma][B^*\to B] < \gamma$ and $\|V_\gamma\|_{B^*\to B}<2\|V\|_{B^*\to B}$. Define the
operator $L_{z,\gamma}$ accordingly. We have 
\begin{equation*}
\bignorm[\prod_{k=1}^M \big(L_z R_{i_k}(z^2)\big) -
 \prod_{k=1}^M \big(L_{z,\gamma}R_{i_k}(z^2)\big)][B\to B]
\les \gamma (2\|V\|_{B^*\to B})^{M-1}
\end{equation*}
uniformly in $z\ge1$.  Thus, by \eqref{eq:nonstatphase},
\begin{equation*}
\limsup_{z \to \infty} \bignorm[\prod_{k=1}^M
\big(L_z R_{i_k}(z^2)\big)][B\to B] \les \gamma
(2\|V\|_{B^*\to B})^{M-1}.
\end{equation*}
Sending $\gamma\to0$ finishes the proof.
\end{proof}

\begin{proof}[Proof of Lemma~\ref{lem:largem}]  Lemma~\ref{lem:directedprod} provides a
recipe for selecting a value of $M$, together with a partition of
unity $\{\Phi_i\}$ and a short-range threshold $d$, so that the sum
over all directed products in~\eqref{eq:decomp2} will be an operator
of norm less than $\frac{1}{4}$.  This fixes the number of undirected
products as approximately $\delta^{M(1-n)} = (20 M)^{M(n-1)}$.  For
each of these, Lemma~\ref{lem:RiemannLebesgue} asserts that its
operator norm tends to zero as $\lambda \to \infty$.  The same is
true for the finite sum over all undirected products of length $M$.
In particular it is less than the directed product estimate provided $z >
z_1(M)$ is sufficiently large.
\end{proof}

\section{Extension to the complex plane}\label{sec:complex}
This section provides a short proof of Corollary~\ref{cor:e-free} via a perturbation
argument.  We first record a strong statement of continuity in the limiting
absorption principle.
\begin{prop} \label{prop:continuity}
Let $\lambda_n \to 1$ and $\eps_n \to 0^+$.  Then for each $\sigma > \frac12$,
\begin{equation*}
\lim_{n\to \infty}
\|R_0(\lambda_n + i\eps_n) - R_0^+(1)\|_{L^2_\sigma \to L^2_{-\sigma}} = 0.
\end{equation*}
\end{prop}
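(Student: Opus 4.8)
The plan is to prove this by combining the explicit kernel representation of $R_0^+(\lambda^2)$ from Section~\ref{sec:free} with a Hilbert--Schmidt / Schur-type estimate on a neighborhood of the diagonal, together with the known continuity of $R_0^\pm$ along the real axis. First I would reduce to proving continuity of the map $\zeta \mapsto R_0(\zeta)$ on $L^2_\sigma \to L^2_{-\sigma}$ at the boundary point $\zeta = 1$ as $\zeta$ approaches from the upper half-plane; the hypothesis $\lambda_n \to 1$, $\eps_n \to 0^+$ means $\zeta_n := \lambda_n^2 - m^2$ (or simply $\lambda_n + i\eps_n$ in the Schr\"odinger normalization used in the statement) approaches $1$ nontangentially, and it suffices to handle an arbitrary such sequence. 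Since the free resolvent $R_0(\zeta)$ for $\mathrm{Im}\,\zeta > 0$ has an explicit kernel given by a Hankel function $C_n \zeta^{(n-2)/4}|x-y|^{-(n-2)/2} H^+_{(n-2)/2}(\sqrt{\zeta}|x-y|)$ (analytically continued), I would split the kernel using a smooth cutoff $\chi(|x-y|)$ into a near-diagonal piece supported on $|x-y| \lesssim 1$ and a far piece supported on $|x-y| \gtrsim \tfrac12$, exactly as in \eqref{eq:Hsplit}.

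For the far piece, the kernel is $\frac{e^{i\sqrt{\zeta}|x-y|}}{|x-y|^{(n-1)/2}} a_\zeta(|x-y|)$ with $a_\zeta$ depending smoothly (indeed analytically) on $\zeta$ near $1$, with all the derivative bounds of \eqref{eq:a} holding uniformly for $\zeta$ in a small complex neighborhood of $1$. Here I would not use Hilbert--Schmidt (the kernel decays like $|x-y|^{-(n-1)/2}$, which is not square-integrable against the weights for all $n$), but rather invoke the oscillatory-integral machinery already present: the difference $R_0(\zeta_n) - R_0^+(1)$ restricted to the far region has kernel $\big(e^{i\sqrt{\zeta_n}|x-y|}a_{\zeta_n} - e^{i|x-y|}a_1\big)|x-y|^{-(n-1)/2}\Phi$, and writing $\sqrt{\zeta_n} = 1 + o(1)$ one controls this by the same estimate that gives $\|R_0(1)\|_{B \to B^*} < \infty$ together with the scaling Proposition~\ref{prop:R0scale} applied to $\zeta_n$ versus $1$; the gain comes from $|\sqrt{\zeta_n} - 1| \to 0$ producing an $o(1)$ factor after an integration by parts in the radial variable (each derivative hitting the phase produces a factor $|\sqrt{\zeta_n}-1|\,|x-y|$ that is absorbed by the cutoff structure, and the non-oscillatory error terms carry explicit powers of $\sqrt{\zeta_n}-1$). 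Alternatively, and more cleanly, I would note that $R_0^\pm(\lambda^2)$ varies \emph{continuously} in $\lambda$ on $(0,\infty)$ as an operator $L^2_\sigma \to L^2_{-\sigma}$ (stated in Section~\ref{sec:free}) and that $R_0(\lambda_n + i\eps_n) \to R_0^+(\lambda_\infty^2)$ for $\eps_n \to 0^+$ is precisely the limiting absorption principle with locally uniform convergence; combining these two facts via a triangle inequality $\|R_0(\lambda_n+i\eps_n) - R_0^+(1)\| \le \|R_0(\lambda_n+i\eps_n) - R_0^+(\lambda_n^2)\| + \|R_0^+(\lambda_n^2) - R_0^+(1)\|$ reduces everything to the (known) locally uniform version of the limiting absorption principle plus continuity along the real axis.

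For the near-diagonal piece, the kernel is $\frac{b_\zeta(|x-y|)}{|x-y|^{n-2}}$ (with a logarithmic modification in even dimensions per \eqref{eq:beven}, \eqref{eq:2db}), supported on $|x-y| \lesssim 1$. Here the difference $\big(b_{\zeta_n}(|x-y|) - b_1(|x-y|)\big)|x-y|^{-(n-2)}$ has a kernel bounded pointwise by $|\sqrt{\zeta_n} - 1|\,|x-y|^{-(n-2)+0+}$ (using that $b$ depends on $\zeta$ only through $\sqrt{\zeta}|x-y|$ and the smoothness estimates for $b$), which for $n \ge 3$ is square-integrable against $\langle x\rangle^{-\sigma}\langle y\rangle^{-\sigma}$ once $\sigma > \tfrac12$, so the Hilbert--Schmidt norm of this piece is $O(|\sqrt{\zeta_n}-1|) \to 0$; for $n = 2$ the same works with the $\log$ expansion \eqref{eq:2db}, the leading term $-\tfrac{1}{2\pi}\log(\sqrt\zeta|x-y|/2)$ contributing a difference $-\tfrac{1}{2\pi}\log\sqrt{\zeta_n} = O(|\zeta_n - 1|)$ times a fixed Hilbert--Schmidt kernel, and the error term $\mathcal{E}$ handled as before.

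The main obstacle I anticipate is the far-diagonal term in dimension $n = 2$ (and borderline behavior as $n$ grows), where $|x-y|^{-(n-1)/2} = |x-y|^{-1/2}$ gives a kernel that is genuinely not Hilbert--Schmidt and not even bounded on $L^2_\sigma \to L^2_{-\sigma}$ by a naive Schur test — this is exactly why the $B \to B^*$ spaces and the oscillatory-integral Lemma~\ref{lem:VCP} were introduced. So the cleanest route is to lean entirely on the already-established facts: the $B \to B^*$ boundedness of $R_0(1)$, the scaling Proposition~\ref{prop:R0scale}, and the stated continuity of $R_0^\pm(\lambda^2)$ in $\lambda$ over $(0,\infty)$ together with the existence of the boundary limits, thereby avoiding any new hard oscillatory estimate and getting convergence in the stronger $B \to B^*$ norm (hence in $L^2_\sigma \to L^2_{-\sigma}$ for every $\sigma > \tfrac12$) essentially for free from a three-term triangle inequality and dominated convergence on the explicit kernels near the diagonal.
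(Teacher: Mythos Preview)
Your proposal is correct, and the ``alternative'' route you identify midway through --- the triangle inequality
\[
\|R_0(\lambda_n+i\eps_n) - R_0^+(1)\| \le \|R_0(\lambda_n+i\eps_n) - R_0^+(\lambda_n)\| + \|R_0^+(\lambda_n) - R_0^+(1)\|
\]
combined with the limiting absorption principle and real-axis continuity --- is exactly the paper's proof. The paper handles the first term by the scaling relation~\eqref{eq:Rscale}, rewriting it (up to harmless constants, since $\lambda_n\to 1$) as $\|R_0(1+i\eps_n/\lambda_n) - R_0^+(1)\|$, which vanishes by the basic LAP at the single point $1$; the second term vanishes by continuity of $R_0^+$ along $(0,\infty)$. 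That is the entire argument.

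Everything else in your proposal --- the near/far kernel splitting, Hilbert--Schmidt bounds on the $b$-piece, integration by parts on the $a$-piece, the $n=2$ far-region worry, dominated convergence --- is unnecessary for this proposition. You also invoke a ``locally uniform'' LAP for the first term; the paper sidesteps that by using scaling to reduce to a single fixed point, which is slightly cleaner since only the pointwise existence of the boundary limit is needed. (A small notational slip: in this proposition the argument of $R_0$ is the spectral parameter itself, so the intermediate object is $R_0^+(\lambda_n)$, not $R_0^+(\lambda_n^2)$.)
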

\begin{proof}
By the triangle inequality and scaling relations,
\begin{align*}
\|R_0(\lambda_n + i\eps_n) &- R_0^+(1)\|_{L^2_\sigma \to L^2_{-\sigma}} \\
&\leq  \|R_0(\lambda_n + i\eps_n) - R_0^+(\lambda_n)\|_{L^2_\sigma \to L^2_{-\sigma}}
+ \|R_0^+(\lambda_n) - R_0^+(1)\|_{L^2_\sigma \to L^2_{-\sigma}} \\
&\les \|R_0(1+i{\scriptstyle \frac{\eps_n}{\lambda_n}}) - R_0^+(1)\|_{L^2_\sigma \to L^2_{-\sigma}}
+  \|R_0^+(\lambda_n) - R_0^+(1)\|_{L^2_\sigma \to L^2_{-\sigma}}.
\end{align*}
Both of the differences in the last line converge to zero by the limiting absorption principle and continuity of the resolvent respectively.
\end{proof}

\begin{proof}[Proof of Corollary \ref{cor:e-free}]
By the standard resolvent identities~\eqref{eq:res_iden} and~\eqref{eqn:resolvdef}
\begin{align*}
\mR_V(\lambda + i\gamma) &=  \mR_0(\lambda+i\gamma) \big[I + V\mR_0(\lambda+i\gamma)\big]^{-1}\\
&= (D_m+\lambda+i\gamma)R_0((\lambda+i\gamma)^2 - m^2)
%\big[I + V(D_m+\lambda+i\gamma)R_0((\lambda+i\gamma)^2-m^2)\big]^{-1}.
\big[I+V\mR_0(\lambda+i\gamma)\big]^{-1}
\end{align*}
The free Dirac resolvent is controlled by rescaling by $\lambda$.  For the gradient term,
$$
\| \nabla R_0((\lambda+i\gamma)^2-m^2)\|_{L^2_\sigma \to L^2_{-\sigma}}
\leq  \| \nabla R_0((1+i{\scriptstyle \frac{\gamma}{\lambda}})^2 - ({\scriptstyle \frac{m}{\lambda}})^2)\|_{L^2_\sigma \to L^2_{-\sigma}} \les 1,
$$
with the last inequality following from continuity of the Schr\"odinger resolvent at 1.  Similarly,
\begin{multline*}
|m+\lambda+i\gamma| \,\| R_0((\lambda+i\gamma)^2-m^2)\|_{L^2_\sigma \to L^2_{-\sigma}}\\
\leq \Big(1+\frac{m}{\lambda} + \frac{\gamma}{\lambda}\Big) 
\| R_0((1+i{\scriptstyle \frac{\gamma}{\lambda}})^2 - ({\scriptstyle \frac{m}{\lambda}})^2)\|_{L^2_\sigma \to L^2_{-\sigma}}
\les 1
\end{multline*}
provided $\frac{\gamma}{\lambda}$ and $\frac{m}{\lambda}$ are sufficiently small.  Boundedness of
$\mR_V(\lambda+i\gamma)$ therefore rests on the behavior of $[I+V\mR_0(\lambda+i\gamma)]^{-1}$.

The crucial bound~\eqref{eq:smallprod} shows that
$$
\| [I + V\mR_0^+(\lambda)]^{-1} \|_{L^2_\sigma \to L^2_{\sigma}} \les 1 + \| V\mR_0^+(\lambda) \|^{M-1}_{L^2_\sigma \to L^2_\sigma}
\leq C(V)
$$
for all $\lambda > \lambda_0$.  We would like to expand
\begin{equation} \label{eqn:gamma_perturbed}
\big[I + V\mR_0(\lambda+i\gamma)\big]^{-1} = \big[ (I + V\mR_0^+(\lambda)) + 
V\big(\mR_0(\lambda+i\gamma) - \mR_0^+(\lambda)\big) \big]^{-1}
\end{equation}
via a Neumann series, and this can be done provided the operator norm of
$V(\mR_0(\lambda+i\gamma) - \mR_0^+(\lambda))$ is less than $\frac{1}{C(V)}$.

The difference of Dirac resolvents can be expanded out as
\begin{align*}
\mR_0(\lambda+i\gamma) - \mR_0^+(\lambda)
&= (D_m + \lambda + i\gamma)R_0((\lambda+i\gamma)^2-m^2) - (D_m + \lambda)R_0^+(\lambda^2-m^2)
\\
&= i\gamma R_0^+(\lambda^2 - m^2) + (D_m + \lambda + i\gamma)
\big(R_0((\lambda+i\gamma)^2-m^2)-R_0^+(\lambda^2-m^2)\big).
\end{align*}
Each of these terms can be bounded using the same scaling arguments as above, with the end result
\begin{multline}
\|\mR_0(\lambda+i\gamma) - \mR_0^+(\lambda)\|_{L^2_\sigma \to L^2_{-\sigma}} \\
\les \frac{\gamma}{\lambda}\|R_0^+(1 - ({\scriptstyle \frac{m}{\lambda}})^2)\|_{L^2_\sigma \to L^2_\sigma} 
+ \Big(1 + \frac{m}{\lambda} + \frac{\gamma}{\lambda}\Big)
\|R_0((1 + i{\scriptstyle \frac{\gamma}{\lambda}})^2 - ({\scriptstyle \frac{m}{\lambda}})^2) 
- R_0^+(1-({\scriptstyle \frac{m}{\lambda}})^2) \|_{L^2_\sigma \to L^2_\sigma}.
\end{multline}

Proposition~\ref{prop:continuity} asserts the existence of constants $\delta > 0$ and $K < \infty$
 such that
if $0 \leq \frac{\gamma}{\lambda}, \frac{m}{\lambda} < \delta$, then
$\|R_0^+( 1 - (\frac{m}{\lambda})^2)\| \leq K$ and 
$\|R_0((1 + i\frac{\gamma}{\lambda})^2 - (\frac{m}{\lambda})^2) 
- R_0^+(1-(\frac{m}{\lambda})^2) \| \leq \frac{1}{10\|V\| C(V)}$.
The latter inequality holds because both arguments in the free resolvent 
reside in a small neighborhood of 1.  With the additional restrictions that
$\delta < \min(1, \frac{1}{5K \|V\|C(V)})$, it follows that
$$
\|\mR_0(\lambda+i\gamma) - \mR_0^+(\lambda)\|_{L^2_\sigma \to L^2_{-\sigma}} < \frac{1}{2\|V\|C(V)}.
$$
Composing with pointwise multiplication by $V$ completes the estimate for~\eqref{eqn:gamma_perturbed}.

When $|\gamma| > \|V\|$, the perturbed resolvent can be estimated by much more elementary means. Here we can use self-adjointness of $D_m$ to bound $\|\mR_0(\lambda + i\gamma)\|_{L^2 \to L^2} \leq \frac{1}{|\gamma|}$, after which it follows that the Neumann series for $[I + V\mR_0(\lambda + i\gamma)]^{-1}$ converges even in the space of bounded operators on $L^2$.  One concludes that $\|\mR(\lambda + i\gamma)\|_{L^2 \to L^2} \leq \frac{1}{|\gamma|(1-|\gamma|\, \|V\|)}$.

Within the cones $0 < |\gamma| < \delta |\lambda|$ with $|\lambda| > \lambda_1$, the perturbed resolvent can instead be bounded using~\eqref{eqn:limap2} and the identity
$$
\mR(\lambda +i\gamma) = \mR_0(\lambda + i\gamma) - \mR_0(\lambda+i\gamma)V\mR_0(\lambda + i\gamma)
+ \mR_0(\lambda+i\gamma)V\mR(\lambda+i\gamma) V\mR_0(\lambda + i\gamma).
$$
The first two terms are bounded operators on $L^2$ with norms less than $\frac{1}{|\gamma|}$ and $\frac{\|V\|}{|\gamma|^2}$ respectively.  In the third term we use the decay of $V$ to map $L^2$ to $L^2_\sigma$, or from $L^2_{-\sigma}$ to $L^2$, then the composition is again a bounded operator on $L^2$ with norm no greater than $\frac{C\|V\|^2}{|\gamma|^2}$.

Put together, the perturbed Dirac resolvent $\mR(\lambda + i\gamma)$ exists as a bounded operator on $L^2$ whenever
$|\gamma| > \|V\|$ or when $|\lambda| > \max(\lambda_1, \frac{\|V\|}{\delta})$ and $\gamma \not= 0$. 
\end{proof}

\end{document}